\newtheorem{thm}{Theorem}
\newtheorem{cor}[thm]{Corollary}
\newtheorem{lemma}[thm]{Lemma}
\newtheorem{prop}[thm]{Proposition}
\theoremstyle{definition}
\newtheorem{definition}[thm]{Definition}
\theoremstyle{remark}
\newtheorem{remark}[thm]{Remark}
\newtheorem{example}[thm]{Example}
\def\mathcs{C^{*}}
\newcommand{\cs}{\ensuremath{\mathcs}}
\DeclareMathSymbol{\rtimes}{\mathbin}{AMSb}{"6F}
\newcommand{\ib}{im\-prim\-i\-tiv\-ity bi\-mod\-u\-le}
\newcommand{\sme}{\,\mathord{\mathop{\text{--}}\nolimits_{\relax}}\,}
\def\C{\mathbf{C}}
\DeclareMathOperator{\Ind}{Ind}
\DeclareMathOperator*{\supp}{supp}
\DeclareMathOperator{\Aut}{Aut}
\def\set#1{\{\,#1\,\}}
\newcommand\sset[1]{\{#1\}}
\let\tensor=\otimes
\def\restr#1{|_{{#1}}}
\def\labelenumi{\textnormal{(\@alph\c@enumi)}}
\def\theenumi{\@alph \c@enumi}
\def\labelenumii{\textnormal{(\@roman\c@enumii)}}
\def\theenumii{\@roman \c@enumii}
\def\alphapart#1{\charno=96
\advance\charno by#1\char\charno}
\def\<{\langle}
\def\>{\rangle}
\let\ipscriptstyle=\scriptscriptstyle
\def\lipsqueeze{{\mskip -3.0mu}}
\def\ripsqueeze{{\mskip -3.0mu}}
\def\ipcomma{\nobreak\mathrel{,}\nobreak}
\newbox\ipstrutbox
\def\ipstrut{\copy\ipstrutbox}
\def\lip#1<#2,#3>{\mathopen{\relax_{\ipstrut\ipscriptstyle{
#1}}\lipsqueeze
\langle} #2\ipcomma #3 \rangle}
\def\blip#1<#2,#3>{\mathopen{\relax_{\ipstrut
\ipscriptstyle{ #1}}\lipsqueeze\bigl\langle} #2\ipcomma #3 \bigr\rangle}
\def\rip#1<#2,#3>{\langle #2\ipcomma #3
\rangle_{\ripsqueeze\ipstrut\ipscriptstyle{#1}}}
\def\brip#1<#2,#3>{\bigl\langle #2\ipcomma #3
\bigr\rangle_{\ripsqueeze\ipstrut\ipscriptstyle{#1}}}
\def\angsqueeze{\mskip -6mu}
\def\smangsqueeze{\mskip -3.7mu}
\def\trip#1<#2,#3>{\langle\smangsqueeze\langle #2\ipcomma #3
\rangle\smangsqueeze\rangle_{\ripsqueeze\ipstrut\ipscriptstyle{#1}}}
\def\btrip#1<#2,#3>{\bigl\langle\angsqueeze\bigl\langle #2\ipcomma
#3
\bigr\rangle
\angsqueeze\bigr\rangle_{\ripsqueeze\ipstrut\ipscriptstyle{#1}}}
\def\tlip#1<#2,#3>{\mathopen{\relax_{\ipstrut\ipscriptstyle{
#1}}\lipsqueeze \langle\smangsqueeze\langle} #2\ipcomma #3
\rangle\smangsqueeze\rangle}
\def\btlip#1<#2,#3>{\mathopen{\relax_{\ipstrut\ipscriptstyle{
#1}}\lipsqueeze
\bigl\langle\angsqueeze\bigl\langle} #2\ipcomma #3
\bigr\rangle\angsqueeze\bigr\rangle}
\def\ip(#1|#2){(#1\mid #2)}
\def\bip(#1|#2){\bigl(#1 \mid #2\bigr)}
\def\Bip(#1|#2){\Bigl( #1 \bigm| #2 \Bigr)}
\newcommand\go{G^{(0)}} 
\newcommand\ho{H^{(0)}}
\newcommand\lo{L^{(0)}}
\renewcommand\lg{\lambda} 
\newcommand\lh{\beta}
\renewcommand\ll{\kappa}
\def\g[#1,#2]{{}_{G}[#1,#2]} 
\def\h[#1,#2]{[#1,#2]_{H}}
\newcommand\op{\text{\normalfont op}}
\newcommand\zop{Z^{\op}}
\newcommand\pg{p_{G}} 
\newcommand\ph{p_{H}}
\newcommand\Pg{\mathfrak{p}_{G}} 
\newcommand\Ph{\mathfrak{p}_{H}}
\newcommand\Lip{\lip \scriptstyle\star}
\newcommand\Rip{\rip \scriptstyle\star}
\newcommand\X{\mathsf{X}}
\newcommand\Y{\mathsf{Y}}
\renewcommand\H{\mathcal{H}}
\DeclareMathOperator{\Lin}{Lin}
\newcommand\HH{\mathscr{H}}
\def\ipp(#1|#2){\ip({#1}|{#2})_{\pi}}
\newcommand\yind{\Y\text{-}\Ind}
\newcommand\lsp{\operatorname{span}}
\newcommand\clsp{\overline{\lsp}}
\newcommand\bundlefont\mathscr
\newcommand\B{\bundlefont{B}}
\newcommand\E{\bundlefont{E}}
\newcommand\CC{\bundlefont{C}}
\newcommand\A{\bundlefont{A}}
\newcommand\gho{G_{\ho}}
\newcommand\hg{H^{G}}
\newcommand\ssb{\sigma^{*}\B}
\newcommand\bh{\B\restr{H}}
\def\sa_#1(#2,#3){\Gamma_{#1}(#2;#3)}
\renewcommand\L{\mathcal{L}}
\newcommand\sacgb{\sa_{c}(G,\B)}
\newcommand\ripd{\rip D}
\let\phi\varphi
\newcommand\eop{\E^{\,\op}}
\newcommand\qop{q^{\op}}
\newcommand{\linke}{L(\E)}
\newcommand\linkq{L(q)}
\newcommand\ea{\mathfrak{a}}
\newcommand\eb{\mathfrak{b}}
\DeclareMathOperator{\lt}{lt}
\newcommand\half{\frac12}
\def\timeofday{
\hours=\time
\minutes=\hours
\divide\hours by60
\multiply\hours by60
\advance\minutes by-\hours
\divide\hours by60
\ifnum\hours>9\else0\fi\the\hours:\ifnum\minutes>9\else
0\fi\the\minutes}
\def\predate{\date{\color{red}\bfseries \the\day\ \ifcase\month\or
  January\or February\or March\or April\or May\or June\or July\or
        August\or September\or October\or November\or
           December\fi\ \the\year\ --- \timeofday -- v001}}
\begin{document}

\author{Aidan Sims}
\address{School of Mathematics and Applied Statistics\\
University of Wollongong \\
NSW 2522\\
Australia}

\email{asims@uow.edu.au}

\author{Dana P. Williams}
\address{Department of Mathematics \\ Dartmouth College \\ Hanover, NH
03755-3551}

\email{dana.williams@Dartmouth.edu}

\subjclass[2000]{46L55}

\keywords{Fell bundle, groupoid, groupoid equivalence, reduced $C^*$-algebra, equivalence theorem,
Hilbert bimodule, $C^*$-correspondence, Morita equivalence}

\date{1 December 2011}

\title[Fell Notes]{\boldmath An Equivalence Theorem for Reduced Fell
  Bundle \cs-Algebras}

\begin{abstract}
  We show that if $\E$ is an equivalence of upper semicontinuous Fell
  bundles $\B$ and $\CC$ over groupoids, then there is a linking
  bundle $\linke$ over the linking groupoid $\L$ such that the full
  cross-sectional algebra of $\linke$ contains those of $\B$ and $\CC$
  as complementary full corners, and likewise for reduced
  cross-sectional algebras. We show how our results generalise to
  groupoid crossed-products the fact, proved by Quigg and Spielberg,
  that Raeburn's symmetric imprimitivity theorem passes through the
  quotient map to reduced crossed products.
\end{abstract}

\maketitle \tableofcontents

\section{Introduction}\label{sec:intro}

The purpose of this paper is to prove a reduced equivalence theorem for cross-sectional algebras of
Fell bundles over groupoids, and to prove that the \ib{} which implements the equivalence between
the reduced $C^*$-algebras is a quotient of the Muhly-Williams equivalence bimodule between the
full $C^*$-algebras \cite{muhwil:dm08}.

An increasingly influential interpretation of Hilbert bimodules (or $C^*$-cor\-respon\-dences) is
to regard them as generalized endomorphisms of $C^*$-algebras. Imprimitivity bimodules represent
isomorphisms, and a Fell bundle over a groupoid $G$ is then the counterpart of an action of $G$ on
a $C_0(G^0)$-algebra $A$. The cross-sectional algebras of the bundle are analogues of groupoid
crossed products. For example, if $G$ is a group and each imprimitivity module is of the form
$_{\alpha}A$ for an automorphism $\alpha$ of $A$ (see, for example \cite{pim:fic87}), then the
cross-sectional algebras of such bundles are precisely those arising from group crossed products;
Fell and Doran called these semidirect products in their magnum opus \cite{fd:representations2}*{\S
VIII.4.2}. In particular, if $A = C_0(\go)$ and each fibre of the Fell-bundle is 1-dimensional,
then the cross-sectional algebras are the usual groupoid $C^*$-algebras.

The classical result which motivates this paper is that if groups $G$ and $H$ act freely, properly
and transitively on the same locally compact Hausdorff space $P$ and the actions commute, then the
groups are the same. To see why, fix $x \in P$. Then for each $g \in G$, there is a unique $h \in
H$ such that $g\cdot x = x\cdot h$, and since the actions commute, $g \mapsto h$ is an isomorphism
of $G$ with $H$. Hence $C^*(G) \cong C^*(H)$ and $C^*_r(G) \cong C^*_r(H)$. A particularly powerful
viewpoint on this is the following.  If $P^{\mathrm{op}}$ is a copy of the space $P$, but with the
actions reversed so that $G$ acts on the right and $H$ on the left, then $L = G \sqcup P \sqcup
P^{\mathrm{op}} \sqcup H$ is a groupoid, called the linking groupoid, with two units. The isotropy
at one unit is $G$ and the isotropy at the other is $H$, and conjugation in $L$ by any element of
$P$ determines an isomorphism from $G$ to $H$. At the level of $C^*$-algebras, we obtain the
following very nice picture: the actions of $G$ and $H$ on $P$ induce convolution-like products
$C_c(G) \times C_c(P) \to C_c(P)$ and $C_c(P) \times C_c(H) \to C_c(P)$, and $C_c(L)$ decomposes as
a block $2 \times 2$ matrix algebra
\[
C_c(L) \cong
\begin{pmatrix}
  C_c(G)  &  C_c(P) \\
  C_c(P^{\mathrm{op}}) & C_c(H)
\end{pmatrix}.
\]
Moreover, the universal norm on $C_c(L)$ restricts to the universal norm on each of $C_c(G)$ and
$C_c(H)$, and likewise for reduced norms. The characteristic function $1_P$ of $P$ is a partial
isometry in the multiplier algebra of each of $C^*(L)$ and $C^*_r(L)$ and conjugation by $1_P$
implements the isomorphisms $C^*(G) \cong C^*(H)$ and $C^*_r(G) \cong C^*_r(H)$.

When $G$ and $H$ do not act transitively, the actions of
$G$ and $H$ on $P$ induce actions of $G$ on $P/H$ and of $H$ on
$G\backslash P$. The picture at the level of groups is now somewhat
more complicated, but the $C^*$-algebraic picture carries over nicely:
replacing $C_c(G)$ with $C_c(G, C_c(P/H))$ and $C_c(H)$ with $C_c(H,
C_c(G \backslash P))$ in the matrix above, we obtain a $^*$-algebra
$L(P)$. The $C^*$-identity allows us to extend the norm on $C_0(P/H)
\rtimes G$ to a norm on $L(P)$. Moreover, this norm is
consistent with the norm on $C_0(G \backslash P) \rtimes H$, and the
completion of $L(P)$ in this norm contains $C_0(P/H) \rtimes G$ and
$C_0(G \backslash P) \rtimes H$ as complementary full
corners. Further, this whole apparatus descends under
quotient maps to reduced crossed products.

To prove an analogue of this equivalence theorem in the context of
Fell bundles, one uses the notion of an \emph{equivalence} of Fell
bundles specified in \cite{muhwil:dm08}. The concept is closely
modeled on the situation of groups; but the natural objects on which
Fell bundles act are Banach bundles in which the fibres are
equivalence bimodules. That is, given Fell bundles $\B$ and $\CC$ over
groupoids $G$ and $H$, an equivalence between the two is, roughly
speaking, an upper semicontinuous Banach bundle $\E$ over a space $Z$
such that $Z$ admits actions of $G$ and $H$ making it into an
equivalence of groupoids in the sense of Renault, each fibre $E_z$ of
$\E$ is an {\ib} from the fibre $B_{r(z)}$ of $\B$ over $r(z)$ to
$C_{s(z)}$, and there are fibred multiplication operations $\B * \E
\to \E$ and $\E * \CC \to \E$ which are compatible with the bundle
maps, and which implement isomorphisms $B_x \otimes_{B_u} E_z \cong
E_{x\cdot z}$. Muhly and Williams show in
\cite{muhwil:dm08}*{Theorem~6.4} that given such an equivalence, the
full cross-sectional algebras $C^*(G, \B)$ and $C^*(H, \CC)$ are
Morita equivalent (Kumjian proves the corresponding statement for
reduced $C^*$-algebras in the $r$-discrete situation in
\cite{kum:pams98}).

In this paper, we show that Muhly and Williams's Morita equivalence passes to reduced algebras. We
do so by constructing a linking bundle $\linke = \B \sqcup \E \sqcup \eop \sqcup \CC$ and showing
that $\sa_{c}(L, \linke)$ has a matrix decomposition as above. We then prove that the completion of
$\linke$ in the universal norm is a linking algebra for a Morita equivalence between $C^*(G,\B)$
and $C^*(H,\CC)$, and likewise for reduced $C^*$-algebras.

We conclude by showing how to recover a generalisation of Quigg and
Spielberg's theorem \cite{quispi:hjm92} which says that the symmetric
imprimitivity bimodule arising in Raeburn's symmetric imprimitivity
theorem \cite{rae:ma88} passes under the quotient map to an
imprimitivity bimodule for reduced crossed products.

Our reduced equivalence theorem itself is not new: late in the development of this paper, we
learned that Moutou and Tu also prove that equivalent Fell bundles have Morita equivalent reduced
cross-sectional algebras \cite{moutu:xx11}. It appears that Moutou and Tu deal only with Fell
bundles whose underlying Banach bundles are required to be continuous rather than just upper
semicontinuous.  (Upper semicontinuous bundles turn out to be the more natural object in the
context of $C^*$-algebras --- see \cite{muhwil:dm08} and especially
\cite{wil:crossed}*{Appendix~C}). Moreover Moutou and Tu restrict attention to \emph{principle}
$G$-spaces for their groupoid equivalences. But these are minor points and the arguments of
\cite{moutu:xx11} would surely go through unchanged to our setting. The main new contribution in
this article that we develop the linking bundle technology to show explicitly that the full
cross-sectional algebras of the linking bundle is a linking algebra for the full cross sectional
algebras of $\B$ and $\CC$, and that the quotient map from the full to the reduced cross-sectional
algebra of the linking bundle implements the quotients $C^*(G,\B) \to C^*_r(G, \B)$ and $C^*(H,
\CC) \to C^*_r(H, \CC)$. In particular, if $I^{\CC}_r$ is the ideal of $C^*(H, \CC)$ consisting of
elements whose reduced norm is zero, then the equivalence bimodule $X_r$ which we obtain between
reduced cross-sectional algebras is the quotient of the equivalence bimodule $X$ between full
algebras by $X \cdot I^{\CC}_r$. Consequently, induction over $X$ carries $I^\CC_r$ to the
corresponding ideal $I^{\B}_r$ of $C^*(G, \B)$.

\section{Background}\label{sec:background}

Recall that for second-countable locally compact Hausdorff groupoids
$G$ and $H$, a $G\sme H$ equivalence is a locally compact Hausdorff
space $Z$ which is simultaneously a free and proper left $G$-space and
a free and proper right $H$-space (with continuous \emph{open} fibre
maps) such that the actions of $G$ and $H$ on $Z$ commute, the map
$r_Z$ induces a homeomorphism of $Z/H$ with $\go$ and the map $s_Z$
induces a homeomorphism of $G\backslash Z$ with $\ho$. Then $G$ acts
on $Z *_r Z$ by $g \cdot (y,z) = (g\cdot y, g\cdot z)$, and the
formula $h \cdot \h[g,h] = g$ defines a homeomorphism $\h[\cdot,\cdot]
: G\backslash (Z *_r Z) \to H$; and $\g[\cdot, \cdot] : Z*_s Z \to G$
is defined similarly (see \cite{mrw:jot87}*{Definition~2.1} for
details).

Recall that an upper semicontinuous Banach bundle over a locally compact Hausdorff space $Z$ is a
topological space $\B$ together with a continuous open surjection $q : \B \to Z$ such that each
$\B_z := q^{-1}(z)$ is a Banach space and: $b \mapsto \|b\|$ is upper semicontinuous; addition is
continuous from $\B *_q \B \to \B$; scalar multiplication is continuous on $\B$; and $\|b_i\| \to
0$ and $q(b_i) \to z$ implies $b_i \to 0_z \in q^{-1}(z)$.  The concept of an upper semicontinuous
Banach bundle goes back to \cite{dg:banach}, where they were called $(H)$-bundles, and the work of
Hofmann \citelist{\cite{hofkei:lnm79} \cite{hof:lnm77}\cite{hof:74}\cite{dauhof:mams68}}. Fell
calls such bundles loose in \cite{fd:representations1}*{Remark~C.1}.  Further details and comments
concerning upper semicontinuous Banach bundles are given in \cite{muhwil:dm08}*{Appendix~A} and in
the \cs-case in \cite{wil:crossed}*{Appendix~C}. As in \cite{muhwil:dm08}, a \emph{Fell bundle}
over a locally compact Hausdorff groupoid $G$ is an upper semicontinuous Banach bundle $q : \B \to
G$ endowed with a continuous bilinear associative map $(a,b) \mapsto ab$ from $\B^{(2)} :=
\set{(a,b) \in \B \times \B : s(q(a)) = r(q(b))}$ to $\B$ such that
\begin{enumerate}
\item $q(ab) = q(a)q(b)$ for all $(a,b) \in \B^{(2)}$;
\item $q(a^*) = q(a)^{-1}$ for all $a \in \B$;
\item $(ab)^* = b^*a^*$ for all $(a,b) \in \B^{(2)}$;
\item for each $u \in \go$, the fibre $A_u := q^{-1}(u)$ is a
  $\cs$-algebra under these operations; and
\item for each $g \in G \setminus \go$, the fibre $B_g := q^{-1}(g)$
  is an $A_{r(g)}\sme A_{s(g)}$-{\ib} with actions determined by
  multiplication in $\B$ and inner products $\lip<a,b> = a^*b$ and
  $\rip<a,b> = ab^*$.
\end{enumerate}
As a notational convenience, we define $r,s : \B \to \go$ by $r(a) :=
r_G(q(a))$ and $s(a) := s_G(q(a))$. See \cite{muhwil:dm08} for more
details regarding Fell bundles over groupoids.

\begin{remark} In the context of bundles over groups, the
fibres in a Fell bundle are not always assumed to be imprimitivity bimodules (they are not assumed
to be full --- see \cite{kum:pams98}*{2.4}). Bundles in which all the fibres are indeed
imprimitivity bimodules are then called \emph{saturated}. We take this condition as part of our
definition.  It should also be observed that the underlying Banach bundle of a Fell bundle over a
\emph{group} is always continuous \cite{bmz}*{Lemma~3.30}.
\end{remark}

\begin{remark}\label{rmk:why notation}
In our notation the fibre of $\B$ over a unit $u$ can be denoted either $A_u$ or $B_u$. The dual
notation allows us to emphasise its dual roles. We write $A_u$ to emphasise its role as a
$C^*$-algebra, and $B_u$ to emphasise its role as an {\ib}.  The \cs-algebra $A:=\sa_{0}(\go,\B)$
is called the \emph{\cs-algebra of the Fell bundle $\B$ over $\go$}.
\end{remark}

We recall from \cite{muhwil:dm08} the definition of an equivalence of
Fell bundles. First, fix a second-countable locally compact Hausdorff
groupoid $G$, a left $G$-space $Z$, a Fell bundle $q_G : \B \to G$,
and a Hausdorff space $\E$ together with a continuous open surjection
$q : \E \to Z$. Again, as a notational convenience, we shall write $r$
for the composition $r_Z \circ q : \E \to \go$. we say that $\B$
\emph{acts on the left of $\E$} if there is a pairing $(b, e) \mapsto
b\cdot e$ from $\B * \E = \set{(b,e) \in \B \times \E : s(b) = r(e)}$
to $\E$ such that
\begin{enumerate}
\item $q(b\cdot e) = q_G(b)q(e)$ for $(b,e) \in \B * \E$;
\item $a\cdot(b\cdot e) = (ab)\cdot e$ whenever $(a,b) \in \B^{(2)}$
  and $(b,e) \in \B * \E$;
\item $\|b \cdot e\| \le \|b\| \|e\|$ for $(b,e) \in \B *
  \E$.\footnote{The equality appearing in the corresponding item in
    \cite{muhwil:dm08} is a typographical error.}
\end{enumerate}
If $\E$ is a right $H$-space, and $q_H : \CC \to H$ is a Fell bundle,
then a right action of $\CC$ on $\E$ is defined similarly.

Now fix second-countable locally compact Hausdorff groupoids $G$ and
$H$ and a $G\sme H$ equivalence $Z$. Suppose that $q_G : \B \to G$ and
$q_H : \CC \to H$ are Fell bundles. Fix a Banach bundle $q : \E \to
Z$. We write $\E *_s \E$ for $\set{(e,g) \in \E \times \E : s(e) =
  s(g)}$ and we define $\E *_r \E$ similarly. We call $\E$ a $\B\sme
\CC$ equivalence if:
\begin{enumerate}
\item\label{it:FBE actions} there are a left action of $\B$ on $\E$
  and a right action of $\CC$ on $\E$ which commute;
\item\label{it:FBE ips} there are sesquilinear maps
  $\lip\B<\cdot,\cdot> : \E *_s \E \to \B$ and $\rip\CC<\cdot,\cdot> :
  \E *_r \E \to \CC$ such that the relations
  \begin{enumerate}
  \item\label{it:FBE ips1} $q_G(\lip\B<e,f>) = \g[q(e),q(f)]$ and
    $q_H(\rip\CC<e,f>) = \h[q(e),q(f)]$,
  \item\label{it:FBE ips2} $\lip\B<e,f>^* = \lip\B<f,e>$ and
    $\rip\CC<e,f>^* = \rip\CC<f,e>$,
  \item\label{it:FBE ips3} $b \lip\B<e,f> = \lip\B<b\cdot e, f>$ and
    $\rip\CC<e,f> c = \rip\CC<e,f\cdot c>$ and
  \item\label{it:FBE ips4} $\lip\B<e,f>\cdot g = e \cdot \rip\CC<f,g>$
  \end{enumerate}
  are satisfied whenever they make sense; and
\item\label{it:FBE imprim} under the actions described in~(\ref{it:FBE
    actions}) and the inner-products defined in~(\ref{it:FBE ips}),
  each $\E_z := q^{-1}(z)$ is an $A_{r(z)}\sme D_{s(z)}$-{\ib}.
\end{enumerate}

As in \cite{simwil:jot11}, if $G,H$ are second-countable locally
compact Hausdorff groupoids with Haar systems $\lg, \lh$ and $Z$ is a
$G \sme H$ equivalence, we write $\zop$ for the ``opposite
equivalence'' $\zop = \set{\bar{z} : z \in Z}$ with $r(\bar{z}) =
s(z)$, $s(\bar{z}) = r(z)$, $h \cdot \bar z := \overline{z \cdot
  h^{-1}}$ and $\bar z \cdot g := \overline{g^{-1} \cdot z}$. Then $L
:= G \sqcup Z \sqcup \zop \sqcup H$ with $\lo := \go \sqcup \ho
\subseteq L$ is a groupoid containing $G$ and $H$ as subgroupoids: we
extend the inverse map to $Z$ and $\zop$ by setting $z^{-1}
:= \bar{z}$; and multiplication between $Z$ and $G,H$ is implemented
by the left and right actions, while multiplication between $Z$ and
$\zop$ is implemented by $\g[\cdot,\cdot]$ and $\h[\cdot,\cdot]$. See
\cite{simwil:jot11}*{Lemma~5} for details. There is a Haar system on
$L$ determined by
\[
\ll^w(F) := \begin{cases}
  \textstyle\int_{G} F(g)\,d\lg^{w}(g) + \int_{H} F(z\cdot
  h)\,d\lh^{s(z)}(h) &\text{ if $w \in \go$}\vadjust{\vskip0.5em}\\
  \textstyle\int_{G} F(\bar{y}\cdot g)\,d\lg^{s(\bar{y})}(g) +
  \int_{H} F(h)\,d\lh^{w}(h) &\text{ if $w \in \ho$}
\end{cases}
\]
for $F \in C_c(L)$ and $w \in \lo$ (see \cite{simwil:jot11}*{Lemma~6}). For $u \in \go$ and $v \in
\ho$, we write $\sigma^u_Z$ and $\sigma^v_{\zop}$ for the restrictions of $\ll^u$ to $Z$ and of
$\ll^v$ to $\zop$. The main results of \cite{simwil:jot11} say that $\cs(L,\ll)$ contains
$\cs(G,\lg)$ and $\cs(H,\lh)$ as the complementary full corners determined by the multiplier
projections $1_{\go}$ and $1_{\ho}$, and that this Morita equivalence passes under the quotient map
$\cs(L,\ll) \to \cs_r(L,\ll)$ to reduced groupoid \cs-algebras. Our goal in this article is to
establish the corresponding statement for Fell bundles. As a first step, we show in the next
section how to construct from an equivalence of Fell bundles a linking bundle over the linking
groupoid.

\section{Linking bundles}\label{sec:linking bundle}

Let $G$ and $H$ be locally compact Hausdorff groupoids, let $Z$ be a
$G\sme H$ equivalence, and let $L$ be the linking groupoid as
above. Suppose that $\pg : \B \to G$ and $\ph : \CC \to H$ are
upper-semicontinuous Fell bundles, and that $q : \E \to Z$ is a bundle
equivalence. We denote by $A$ the \cs-algebra $\sa_{0}(\go,
p_G^{-1}(\go))$ of the bundle $\B$, and by $D$ the \cs-algebra
$\sa_{0}(\ho, p_H^{-1}(\ho))$ of $\CC$; so the fibre over $u \in \go$
is $A_u$, the fibre over $v \in \ho$ is $D_v$, and each $\E_z$ is an
$A_{r(z)}\sme D_{s(z)}$-{\ib}.

Let $\eop=\set{\bar e:e\in\E}$ be a copy of the topological space $\E$
endowed with the conjugate Banach space structure $\alpha \bar e +
\bar f = \overline{(\overline{\alpha} e + f)}$ on each fibre. Then
$\qop:\eop\to\zop$ is an upper-semicontinuous Banach bundle with
$\qop(\bar e)=\overline{q(e)}$.  We have $s(\bar e) = s(\qop(\bar e))
= r(e)$ and likewise $r(\bar e) = s(e)$, so we obtain a right
$\B$-action and a left $\CC$-action on $\eop$ by
\begin{equation}\label{eq:eop actions}
  \bar e\cdot b=\overline{b^{*}\cdot e} \quad\text{ and }\quad c\cdot \bar e =\overline{e\cdot c^{*}}.
\end{equation}
The inner products on $\eop*_{r}\eop$ and $\eop *_{s} \eop$ are given
by $\rip\B<\bar e,\bar f>=\lip\B<e,f>$ and $\lip\CC<\bar e,\bar
f>=\rip\CC<e,f>$.  Routine calculations show that each $E^{\op}(\bar
z)$ is the dual {\ib} $E(z)^{\sim}$ of $E(z)$. Since $s(z)=r(\bar z)$
and $r(z)=s(\bar z)$, axioms (a), (b)~and~(c) of
\cite{muhwil:dm08}*{Definition~6.1} hold, so $\eop$ is a
$\CC\sme\B$-equivalence.

Let $\linke=\B\sqcup\E\sqcup\eop\sqcup\CC$ and define
$\linkq:\linke\to L$ by
\[
\linkq|_{\B} = \pg, \quad \linkq|_{\CC} = \ph,\quad \linkq|_{\E} =
q\quad\text{ and }\quad \linkq|_{\eop} = \qop.
\]
Since $e \mapsto \bar e$ is a fiberwise-isometric homeomorphism from
$\E$ to $\eop$ and since $z \mapsto \bar z$ is a homeomorphism from
$Z$ to $\zop$, the bundle $\linke$ is an upper semicontinuous Banach
bundle. Let
\begin{equation*}
  \linke^{(2)}=\set{(\ea,\eb)\in \linke\times\linke:
    s(\linkq(\ea))=r(\linkq(\eb))}.
\end{equation*}
Define $m:\linke^{(2)}\to\linke$ to coincide with the given
multiplications on $\B$ and $\CC$ and with the actions of $\B$ and
$\CC$ on $\E$ and $\eop$, and to satisfy
\begin{equation*}
  m(e, \bar f)=\lip\B<e,f>\text{ for
      $(e,f)\in\E*_{s}\E$}
    \quad\text{and}\quad
    m(\bar e, f)=\rip\CC<e,f>\text{ for $(e,f)\in\E*_{r}\E$.}
\end{equation*}
We define $\ea\mapsto \ea^{*}$ on $\linke$ to extend the given
involutions on $\B$ and $\CC$ by setting $e^{*}=\bar e$ on $\E$ and
${\bar e}^{*}=e$ on $\eop$.

\begin{lemma}
  \label{lem:linking bundle}
  With notation as above, the bundle $\linke$ is a Fell bundle over
  $L$. Moreover, the \cs-algebra $\sa_0(\lo, \linkq^{-1}(\lo))$ is
  isomorphic to $A \oplus D$.
\end{lemma}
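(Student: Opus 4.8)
The plan is to verify the defining conditions of a Fell bundle for $\linkq : \linke \to L$ one fibre-type at a time, and then to read the decomposition of the unit-fibre algebra off the splitting $\lo = \go \sqcup \ho$. The discussion preceding the statement already shows that $\linke$ is an upper semicontinuous Banach bundle and that $\eop$ is a $\CC\sme\B$-equivalence with $E^{\op}(\bar z) = E(z)^{\sim}$, so only the algebraic structure remains. Because $\B,\E,\eop,\CC$ are clopen in $\linke$, it suffices to examine the eight composable fibre-type combinations for $m$ separately, and in each case $m$ restricts to one of the given Fell-bundle products, one of the four module actions, or one of the inner products $\lip\B<\cdot,\cdot>$, $\rip\CC<\cdot,\cdot>$; continuity and bilinearity are then inherited from the corresponding data of the equivalence, and the identity $\linkq(m(\ea,\eb)) = \linkq(\ea)\linkq(\eb)$ is checked against the product rules in $L$, the mixed cases using $q_G(\lip\B<e,f>) = \g[q(e),q(f)]$ and $q_H(\rip\CC<e,f>) = \h[q(e),q(f)]$.

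The substantive part is associativity of $m$, which must be verified for each composable triple of fibre types. When all three factors lie in $\B$, or all in $\CC$, this is associativity in the respective Fell bundle; when a single $\E$- or $\eop$-factor is flanked by $\B$- or $\CC$-factors it is associativity of the corresponding action. The genuinely new identities are those in which an inner product is formed between two of the factors: for $e,f,g \in \E$ one needs $\lip\B<e,f>\cdot g = e\cdot\rip\CC<f,g>$ (the imprimitivity relation of the equivalence), together with $b\,\lip\B<e,f> = \lip\B<b\cdot e,f>$ and $\rip\CC<e,f>\,c = \rip\CC<e,f\cdot c>$ for the cases with an adjacent bundle element; the remaining combinations involving $\eop$ collapse to these once the definitions $\bar e\cdot b = \overline{b^{*}\cdot e}$, $c\cdot\bar e = \overline{e\cdot c^{*}}$ and the conjugate inner products are unwound. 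I expect this bookkeeping to be the main obstacle: no single case is hard, but there are many of them and each must be matched to exactly the right equivalence axiom.

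The involution is handled in the same spirit. The relation $\linkq(\ea^{*}) = \linkq(\ea)^{-1}$ follows from $\qop(\bar e) = \overline{q(e)}$ and $z^{-1} = \bar z$, while $(\ea\eb)^{*} = \eb^{*}\ea^{*}$ reduces in the mixed cases to $\lip\B<e,f>^{*} = \lip\B<f,e>$ and $\rip\CC<e,f>^{*} = \rip\CC<f,e>$. The remaining two conditions are then immediate: each unit fibre is $A_u$ (for $u \in \go$) or $D_v$ (for $v \in \ho$), a $C^*$-algebra by hypothesis; and each non-unit fibre is an imprimitivity bimodule, namely $B_g$ over $g \in G$, $C_h$ over $h \in H$, $\E_z$ over $z \in Z$, and $E^{\op}(\bar z) = E(z)^{\sim}$ over $\bar z \in \zop$. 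In each case one confirms that the actions and the inner products $a^{*}b$, $ab^{*}$ prescribed by the Fell-bundle definition agree with the structure the fibre already carries, which is precisely the content of the identities recorded above.

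Finally, $\go$ and $\ho$ are disjoint clopen subsets of $\lo$ over which $\linke$ restricts to $\pg^{-1}(\go)$ and $\ph^{-1}(\ho)$, so each section in $\sa_0(\lo,\linkq^{-1}(\lo))$ splits uniquely into its restrictions, giving
\[
\sa_0(\lo,\linkq^{-1}(\lo)) \cong \sa_0(\go,\pg^{-1}(\go)) \oplus \sa_0(\ho,\ph^{-1}(\ho)) = A \oplus D.
\]
Since no unit in $\go$ is composable in $L$ with a unit in $\ho$, the product of a section supported over $\go$ with one supported over $\ho$ vanishes; hence this identification respects the (fibrewise) product and involution, and is therefore a $*$-isomorphism onto $A \oplus D$.
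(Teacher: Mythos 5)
Your proposal is correct and follows essentially the same route as the paper's proof: continuity and the algebraic axioms are checked componentwise using the topological disjointness of $\B$, $\E$, $\eop$, $\CC$, with associativity in the mixed cases reduced to the imprimitivity relation $\lip\B<e,f>\cdot g = e\cdot\rip\CC<f,g>$ and the involution identity reduced to the inner-product axioms and the defining formulas for the $\eop$ actions, and the final isomorphism obtained from the clopen splitting $\lo = \go \sqcup \ho$. The only difference is that you enumerate the case analysis more explicitly than the paper does.
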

\begin{proof}
  We know already that $\linke$ is an upper-semicontinuous Banach
  bundle, that each $\linke_u$ is a \cs-algebra and each $\linke_x$ is
  a $\linke_{r(x)}\sme \linke_{s(x)}$-{\ib}. The fibre map $q$
  preserves multiplication and involution by definition of these
  operations. The operations are continuous because they are
  continuous on each component of $\linke$ and of $\linke * \linke$,
  and the components are topologically disjoint. That $(\ea\eb)^* =
  \eb^*\ea^*$ is clear on $\B * \B$ and $\CC * \CC$, follows from the
  inner-product axioms on $\E * \eop$ an $\eop * \E$, and follows
  from~\eqref{eq:eop actions} for the remaining
  pairings. Associativity for triples from $\E * \eop * \E$ and $\eop
  * \E * \eop$ follows from the {\ib} axiom $\Lip<e,f> g = e
  \Rip<f,g>$, and is clear for all other triples.

  The map $f \mapsto (f|_{\go}, f|_{\ho})$ is a surjection $\sa_0(\lo,
  \linkq^{-1}(\lo)) \to A \oplus D$, and the inverse makes sense
  because $\go$ and $\ho$ are topologically disjoint. Hence
  $\sa_0(\lo, \linkq^{-1}(\lo)) \cong A \oplus D$
\end{proof}

Resume the hypotheses of Lemma~\ref{lem:linking bundle}. It is routine
to check that $(\Pg \phi)(g) := \chi_{\go}(r(g)) \phi(g)$ determines a
bounded self-adjoint map on $\sacgb$ under the inner-product
$(\phi,\psi) \mapsto \phi^*\psi$, and hence extends to a multiplier
projection\label{page:multiplier}, also denoted $\Pg$, of
$\cs(G;\B)$. Taking adjoints, $(\phi \Pg)(\ea) = \chi_{\go}(s(\ea))
\phi(\ea)$. The corresponding projection $\Ph$ for $H$ is defined
similarly.

\begin{remark}
  As in \cite{simwil:jot11}, we think of $\varphi \in \Gamma_c(L,
  \linke)$ as a matrix
  \[
  \begin{pmatrix} \varphi_{G} & \varphi_{Z} \\ \varphi_{\zop} &
    \varphi_{H}\end{pmatrix}
  \]
  where $\varphi_{G}$ is the restriction of $\varphi$ to $G \subseteq
  L$ and similarly for the other terms. With respect to this
  decomposition, we have
  \[
  \varphi\psi =
  \begin{pmatrix} \varphi_G \psi_G + \varphi_Z \psi_{\zop} & \varphi_G
    \psi_Z + \varphi_Z \psi_H \\
    \varphi_{\zop} \psi_G + \varphi_H \psi_Z & \varphi_{\zop} \psi_Z +
    \varphi_H \psi_H \end{pmatrix},
  \]
where we have used juxtaposition for the convolution product restricted to the various
corners.\footnote{In fact, the products in the matrix can be expressed in terms of the
inner-products and module actions from \cite{muhwil:dm08}*{Theorem~6.4}.} Moreover $\varphi_G = \Pg
\varphi \Pg$, $\varphi_Z = \Pg \varphi \Ph$, $\varphi_{\zop} = \Ph \varphi \Pg$, and $\varphi_H =
\Ph \varphi \Ph$.
\end{remark}

\begin{lemma}
\label{lem:full corners}
Resume the hypotheses of Lemma~\ref{lem:linking bundle}. Then $\Pg$
and $\Ph$ are full multiplier projections of $\cs(L; \linke)$.
\end{lemma}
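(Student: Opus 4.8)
The plan is to show that $\Pg$ generates $\cs(L;\linke)$ as a (closed, two-sided) ideal; fullness of $\Ph$ will then follow by the symmetric argument, interchanging the roles of $G,\B$ and $H,\CC$ (and hence of $\E$ and $\eop$). Write $I:=\clsp\set{a\Pg b:a,b\in\cs(L;\linke)}$ for the ideal generated by $\Pg$. First I would use the relation $\Pg+\Ph=1$ and the matrix calculus of the preceding remark: since $a\Pg=\Pg a\Pg+\Ph a\Pg=a_G+a_{\zop}$ and $\Pg b=\Pg b\Pg+\Pg b\Ph=b_G+b_Z$, we get
\[
a\Pg b=a_Gb_G+a_Gb_Z+a_{\zop}b_G+a_{\zop}b_Z,
\]
and compressing by $\Pg$ and $\Ph$ (which preserve $I$, being multipliers) shows that each of the four corner-products on the right lies in $I$; in particular $\Ph(a\Pg b)\Ph=a_{\zop}b_Z\in I$.

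Next I would reduce to a single corner. Because $\Pg+\Ph=1$ and $\cs(L;\linke)$ has an approximate identity, any ideal containing both diagonal corners $\Pg\cs(L;\linke)\Pg$ and $\Ph\cs(L;\linke)\Ph$ must contain $\Pg\cs(L;\linke)$ and $\Ph\cs(L;\linke)$, hence all of $\cs(L;\linke)$. The corner $\Pg\cs(L;\linke)\Pg$ is a \calg{}, so it equals the closed span of its own products and therefore lies in $I$ automatically. Everything thus reduces to showing that the corner $\Ph\cs(L;\linke)\Ph$ lies in $I$, i.e.\ that $\clsp\set{a_{\zop}b_Z}$ exhausts it.

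For this crux I would identify the products $a_{\zop}b_Z$ explicitly. Writing $a_{\zop}=\overline\alpha$ and $b_Z=\beta$ with $\alpha,\beta\in\Gamma_c(Z;\E)$, a direct computation with the Haar system $\ll$ --- in which only the $\zop$-part of $\ll^w$ ($w\in\ho$) survives, since $a_{\zop}$ is supported on $\zop$ --- together with the defining formula $m(\bar e,f)=\rip\CC<e,f>$, gives
\[
(a_{\zop}b_Z)(h)=\int_G\rip\CC<\alpha(g^{-1}\cdot y),\beta(g^{-1}\cdot y\cdot h)>\,d\lg(g)
\]
for any fixed $y\in Z$ with $s(y)=r(h)$. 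This is precisely the right $\cs(H;\CC)$-valued inner product of the Muhly--Williams imprimitivity bimodule built from $\E$ in \cite{muhwil:dm08}*{Theorem~6.4}. Since that bimodule is an \ib{}, it is full as a right Hilbert $\cs(H;\CC)$-module, so the closed span of these inner products is all of $\cs(H;\CC)$; hence $\Ph\cs(L;\linke)\Ph\subseteq I$ and $I=\cs(L;\linke)$.

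The main obstacle is exactly this last point: that the right inner products exhaust the corner. Invoking \cite{muhwil:dm08}*{Theorem~6.4} makes it immediate, but one must first match the linking-groupoid convolution to the Muhly--Williams inner product, which means tracking the Haar-system and modular bookkeeping hidden in the displayed formula. A self-contained alternative that avoids the full equivalence theorem is to build an approximate identity for $\Ph\cs(L;\linke)\Ph$ of the form $\overline{\xi_i}\,\xi_i=\langle\xi_i,\xi_i\rangle_{\cs(H;\CC)}$; this rests on fullness of the fibrewise inner products $\rip\CC<\cdot,\cdot>$ on each \ib{} $\E_z$, glued over $\ho$ by a partition of unity using that $s_Z\colon Z\to\ho$ is an open surjection with proper $H$-action. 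Either way, once this corner is captured both $\Pg$ and $\Ph$ are full.
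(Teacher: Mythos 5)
Your proposal is correct and follows essentially the same route as the paper: both reduce fullness of $\Pg$ to the observation that the compression $\Ph(\varphi\Pg\psi)\Ph=\varphi_{\zop}\psi_Z$ realises the right $\cs(H;\CC)$-valued inner product of the Muhly--Williams bimodule, whose closed span is dense by the fullness axiom (IB2) established in \cite{muhwil:dm08}. The only (harmless) difference is packaging: you use the ideal generated by $\Pg$ together with the quotient-norm trick to skip the off-diagonal corners, whereas the paper checks directly that all four corners of $\varphi\Pg\psi$ are densely spanned, citing \cite{muhwil:dm08}*{Proposition~6.10} for the $\E$ and $\eop$ corners.
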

\begin{proof}
We just show that $\Pg$ is full; the corresponding statement for $\Ph$ follows by symmetry. Fix
$\varphi, \psi \in \Gamma_c(L;\linke)$. Using the matrix notation established above, we have
\[
\varphi \Pg \psi =
\begin{pmatrix}
  \varphi_G \psi_G & \varphi_G \psi_Z \\
  \varphi_{\zop} \psi_G & \varphi_{\zop}\psi_Z
\end{pmatrix}.
\]
That elements of the form $\varphi_G \psi_G$ span a dense subalgebra
of $\Gamma_c(G; \B)$ is clear.  That elements of the form $\varphi_G
\psi_Z$ span a dense subspace of $\Gamma_c(Z; \E)$ and likewise that
elements of the form $\varphi_{\zop} \psi_G$ span a dense subspace
of $\Gamma_c(\zop; \eop)$ follows from
\cite{muhwil:dm08}*{Proposition~6.10}. That elements of the form
$\varphi_{\zop}\psi_Z$ span a dense subspace of $\Gamma_c(H;\CC)$
follows from the argument which establishes axiom~(IB2) in
\cite{muhwil:dm08}*{Section~7}.
\end{proof}

Recall that the inductive-limit topology on $C_c(X)$ for a locally compact Hausdorff space $X$ is
the unique finest locally convex topology such that for each compact $K \subseteq X$, the inclusion
of $C_c(X)^K = \{f \in C_c(X) : \supp(f) \subseteq K\}$ into $C_c(X)$ is continuous (see for
example \cite{fd:representations1}*{II.14.3} or \cite{rw:morita}*{\S D.2}). In particular,
\cite{rw:morita}*{Lemma~D.10} says that to check that a linear map $L$ from $C_{c}(X)$ into any
locally convex space $M$ is continuous, it suffices to see that if $f_{n}\to f$ uniformly and if
all the supports of the $f_{n}$ are contained in the same compact set $K$, then $L(f_{n})\to L(f)$.

\begin{remark}
  \label{rem:ourproof}
  We are now in a situation analogous to that of
  \cite{simwil:jot11}*{Remark~8}.  By the Disintegration Theorem for
  Fell bundles, \cite{muhwil:dm08}*{Theorem~4.13}, any pre-$\cs$-norm
  $\|\cdot\|_\alpha$ on $\Gamma_c(L;\linke)$ which is continuous in
  the inductive-limit topology is dominated by the universal
  norm. Hence the argument of \cite{simwil:jot11}*{Remark~8} shows that
  $\Pg \cs_\alpha(L;\linke) \Ph$ is a $\cs_\alpha(G;\B) \sme
  \cs_\alpha(H;\CC)$-imprimitivity bimodule. So to prove that
  $\cs(G;\B)$ is Morita equivalent to $\cs(H, \CC)$ we just need to
  show that for $F \in \Pg \sa_{c}(L,\linke) \Pg$, the universal norms
  $\|F\|_{\cs(L,\linke)}$ and $\big\|F|_G\big\|_{\cs(G;\B)}$ coincide,
  and similarly for the reduced algebras (the corresponding statements
  for $H$ hold by symmetry).
\end{remark}

\section{The reduced norm}

In this section we recall the construction of the reduced
cross-sectional algebra of a Fell bundle.  We first discuss how to
induce representations from \cs-algebra of the restriction of a Fell
bundle to a closed subgroupoid up to representations of the
\cs-algebra of the whole bundle. We then apply this construction to
the closed subgroupoid $\go$ of $G$ to induce representations of the
$C^*$-algebra $A = \sa_{0}(G^{(0)}, \B|_{G{(0)}})$ up to
representations of $\cs(G;\B)$. These are, by definition, the regular
representations whose supremum determines the reduced norm.

\subsection{Induced representations}\label{sec:induc-repr}

Let $G$ be a second countable locally compact Hausdorff groupoid with Haar system
$\sset{\lambda^{u}}_{u\in\go}$.  Let $q : \B\to G$ be a separable Fell bundle as described in
\cite{kmqw:nyjm10}*{\S1.3}. Assume that $H$ is a closed subgroupoid of $G$ with Haar system
$\sset{\alpha^{u}}_{u\in\ho}$.  We write $q_{H}:\B\restr H\to H$ for the Fell bundle obtained by
restriction to $H$.  We want to induce representations of $\cs(H,\B\restr H)$ to $\cs(G;\B)$ using
the Equivalence Theorem \cite{muhwil:dm08}*{Theorem~6.4} for Fell bundles.  We will use the set-up
and notation from \cite{ionwil:pams08}*{\S2}.  In particular, we recall that $\gho=s^{-1}(\ho)$ is
a $(\hg,H)$-equivalence where $\hg$ is the \emph{imprimitivity groupoid} $(\gho*_{s}\gho)/H$.  Let
$\sigma:\hg\to G$ be the continuous map given by $\sigma\bigl([x,y]\bigr) =xy^{-1}$.  The pull-back
Fell bundle $\sigma^{*}q:\sigma^{*}\B\to \hg$ is the Fell bundle $\sigma^{*}\B = \{([x,y], b) :
[x,y] \in \hg, b \in \B, \sigma([x,y]) = q(b)\}$ with bundle map $\sigma^*([x,y], b) = [x,y]$ over
$\hg$.

Let $\E=q^{-1}(\gho)$; then $q$ restricts to a map $q:\E\to \gho$. We
wish to make this Banach bundle into a $\ssb\sme\bh$-equivalence (see
\cite{muhwil:dm08}*{Definition~6.1}).  It is clear how $\bh$ acts on
the right of $\E$, and we get a left action of $\ssb$ via
\begin{equation*}
  \bigl([x,y],b\bigr)\cdot e:=be\quad\text{for $q(e)=yh$.}
\end{equation*}
(Since $q(b)=xy^{-1}$, $q(be)=xh$ as required.) The ``inner products''
on $\E*_{r}\E$ and $\E*_{s}\E$ are given by
\begin{equation*}
  \rip\bh<e,f>=e^{*}f\quad\text{and}\quad \lip\ssb<e,f>=\bigl([q(e),
  q(f)], ef^{*}\bigr),
\end{equation*}
respectively.  It now straightforward to check that $\E$ is a
$\ssb\sme\bh$-equivalence.  By \cite{muhwil:dm08}*{Theorem~6.4},
$\sa_{c}(\gho,\E)$ is a pre-{\ib} with actions and inner products
determined by
\begin{align}
  F\cdot \phi(z)&=\int_{G}
  F\bigl([z,y]\bigr)\phi(y)\,d\lambda_{s(z)}(y),\label{eq:1}
  \\
  \phi\cdot g(z)&=\int_{H}
  \phi(zh)g(h^{-1})\,d\alpha^{s(z)}(h),\label{eq:2} \\
  \Rip<\phi,\psi>(h)
  &=\int_{G}\phi(y)^{*}\psi(yh)\,d\lambda_{r(h)}(y),\label{eq:3}
  \\
  \Lip<\phi,\psi>\bigl([x,y]\bigr)&=
  \int_{G}\phi(xh)\psi(yh)^{*}\,d\alpha^{s(x)}(h)\label{eq:4}
\end{align}
for $F\in\sa_{c}(\hg,\ssb)$, $\phi,\psi\in \sa_{c}(\gho,\E)$ and $g\in
\sa_{c}(H, \bh)$.  The completion $\X=\X_{H}^{G}$ is a
$\cs(\hg,\ssb)\sme\cs(H,\bh)$-{\ib}.
\begin{remark}
  \label{rem-oops}
  It is pleasing to note that the formalism of Fell bundles is such
  that equations \eqref{eq:1}--\eqref{eq:4} are virtually identical to
  those in the scalar case: see
  \cite{ionwil:pams08}*{Eq. (1)--(4)}.\footnote{Well, they would be if
    it weren't for the typos in equations (1)~and (4) in
    \cite{ionwil:pams08}.}  The only difference is that complex
  conjugates in the scalar case are replaced by adjoints.
\end{remark}

To construct induced representations using the machinery of
\cite{rw:morita}*{Proposition~2.66}, we need a nondegenerate
homomorphism $V : \cs(G;\B) \to \L(\X)$ which will make
$\X$ into a right Hilbert $\cs(G;\B)\sme\cs(H,\bh)$-bimodule (the data
needed to induce representations \emph{a la} Rieffel.)  Define
$V:\sacgb \to \Lin(\sa_{c}(\gho,\E))$ by
\begin{equation}\label{eq:V-def}
  V(f)(\phi):=\int_{G}f(y)\phi(y^{-1}z)\,d\lambda^{r(z)}(y).
\end{equation}
By the Tietz Extension Theorem for upper semicontinuous Banach bundles
\cite{muhwil:dm08}*{Proposition~A.5}, each $\phi \in \sa_{c}(\gho,\E)$
is the restriction of an element of $\sacgb$. So the argument of
\cite{ionwil:pams08}*{Remark~1} and the paragraph which follows
yields\begin{equation*} \Rip<V(f)\phi,\psi>=\Rip<\phi,V(f^{*})\psi>.
\end{equation*}
The map $f\mapsto \Rip<V(f)\phi,\psi>$ is continuous in the
inductive-limit topology, and the existence of approximate units in
$\sacgb$ implies that
\[
\set{V(f)\phi:\text{$f\in \sacgb$ and $\phi\in\sa_{c}(\gho,\E)$}}
\]
spans a dense subspace of $\sa_{c}(\gho,\E)$.  Then
\cite{kmqw:nyjm10}*{Proposition~1.7} implies that $V$ is bounded and
extends to a nondegenerate homomorphism as required.

Now if $L$ is a representation of $\cs(H,\bh)$, then the induced
representation $\Ind_{H}^{G}L$ of $\cs(G;\B)$ acts on the completion
of $\X\odot \H_{L}$ with respect to
\begin{equation*}
  \ip(\phi\tensor h|\psi\tensor k)=
  \bip(L(\Rip<\psi,\phi>)h|k)_{\H_{L}}.
\end{equation*}
Fix $\phi\in\sa_{c}(\gho,\E)$. Writing $f \cdot \phi$ for $V(f)\phi$,
we have
\begin{equation*}
  (\Ind_{H}^{G}L)(f)(\phi\tensor h)=f\cdot\phi\tensor h,
\end{equation*}
and, as in \cite{ionwil:pams08}*{Remark~1}, $f\cdot \phi=f*\phi$.

\subsection{Regular Representations and the reduced
  \texorpdfstring{\cs}{C*}-algebra}
\label{sec:regul-repr-cs_rg}

\emph{Regular representations} are, by definition, those induced from $A=\sa_{0}(\go,\B)$.  Thus,
in the notation of Section~\ref{sec:induc-repr}, $H=\go$, $\gho=G$, $\E=\B$, and we write $\A$ in
place of $\B\restr{\go}$ (see Remark~\ref{rmk:why notation}); in particular each $B(x)$ is a
$A(r(x))\sme A(s(x))$-{\ib}. We also have $\Rip<\phi,\psi>={\phi^{*}\psi\restr{\go}}$, with the
product being computed in $\sa_{c}(L,\linke)$.

Let $\pi$ be a representation of $A$ on $H_\pi$. Let $\tilde{\pi}$ be
the extension of $\pi$ to $M(A)$, and let $i: C_0(\go) \to M(A)$ be
the map characterised by $\big(i(f)a\big)(u) = f(u)a(u)$ for $u \in
\go$. Then $\phi := \tilde{\pi} \circ i$ is a representation of
$C_0(\go)$ on $H_\pi$ which commutes with $\pi$. Example~F.25 of
\cite{wil:crossed} shows that there is a Borel Hilbert bundle $\go *
\HH$ and a finite Radon measure $\mu$ on $\go$ such that $\pi$ is
equivalent to a direct integral $\int^\oplus_{\go} \pi_u \,d\mu$, and
such that if $L : f \to L_f$ is the diagonal inclusion of $C_0(\go)$
in $B(L^2(\go * \HH, \mu))$, then $\pi(i(f)a) = L_f \pi(a)$ for $a \in
A$. So each $\pi_u$ factors through $A_u$. We will usually write
$\pi_{u}\bigl(a(u)\bigr)$ in place of $\pi_{u}(a)$ for $a\in A$. See
\cite{muhwil:nyjm08}*{p.~46} for more details. The regular
representation $\Ind \pi=\Ind_{\go}^{G} \pi$ then acts on the
completion of $\sacgb\odot L^{2}(\go*\HH,\mu)$ with respect to
$\bip(\phi\tensor h|\psi\tensor k) =
\bip(\pi\bigl(\psi^{*}*\phi\bigr)h|k)$, and a quick calculation yields
\begin{equation}
  \label{eq:6}
  \bip(\phi\tensor h|\psi\tensor k)
  = \int_{\go}\int_{G} \bip(\pi_{u}\bigl(\psi(x)^{*}\phi(x)\bigr)h(u) |{k(u)})\,d\lambda_{u}(x)\,d\mu(u).
\end{equation}
Then $\Ind\pi$ acts by:
\begin{equation}
  \label{eq:7}
  (\Ind\pi)(f)(\phi\tensor h)=V(f)(\phi)\tensor h= f\cdot\phi\tensor h.
\end{equation}

We next define the reduced algebra of a Fell bundle. We define the
reduced norm by analogy with the one-dimensional case as the supremum
of the norms determined by induced representations of $A$. We then
show that this agrees, via $V$, with the operator norm on
$\L(\X)$. This is equivalent to Definition~2.4 and Lemma~2.7 of
\cite{moutu:xx11}, though the roles of definition and lemma are
interchanged.

\begin{definition}
  We define the \emph{reduced norm} on $\sacgb$ by
  \begin{equation*}
    \|f\|_{r}:=\sup\set{\|(\Ind\pi)(f)\|:\text{$\pi$ is a
        representation of $A$}}.
  \end{equation*}
\end{definition}

Since the kernel of $\Ind\pi$ depends only on the kernel of $\pi$ (see
\cite{rw:morita}*{Corollary~2.73}), we have $\|f\|_{r}=\|(\Ind\pi)(f)\|$ for any faithful
representation $\pi$ of $A$. We define the \emph{reduced \cs-algebra} $\cs_{r}(G;\B)$ of $\B$ to be
the quotient of $\cs(G;\B)$ by $I_{\cs_{r}(G,\B)}:=\set{a\in\cs(G,B):\|a\|_{r}=0}$.

\begin{lemma}
  \label{lem-x-red}
  Let $\X=\X_{\go}^{G}$ and $V:\cs(G;\B)\to \L(\X)$ the homomorphism
  determined by~\eqref{eq:V-def}.  Then $\ker V=I_{\cs_{r}(G;\B)}$ and
  $V$ factors through an injection of $\cs_{r}(G;\B)$ into $\L(\X)$.
  In particular, $\|V(f)\|=\|f\|_{r}$.
\end{lemma}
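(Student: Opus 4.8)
The plan is to recognise the regular representations $\Ind\pi$ as the Rieffel inductions of representations $\pi$ of $A$ through the right Hilbert $A$-module $\X$ along the homomorphism $V$, and then to use that such induction is contractive on $\L(\X)$ in general and isometric when $\pi$ is faithful. First I would note that the inner product~\eqref{eq:6} placed on $\sacgb\odot H_\pi$ is precisely the one whose completion is the internal tensor product $\X\tensor_A H_\pi$, and that by~\eqref{eq:7} the operator $(\Ind\pi)(f)$ acts there by $\phi\tensor h\mapsto V(f)\phi\tensor h$. In other words $(\Ind\pi)(f)$ is exactly the operator $V(f)\tensor 1$ induced on $\X\tensor_A H_\pi$ by $V(f)\in\L(\X)$, so that $\|(\Ind\pi)(f)\|=\|V(f)\tensor 1\|$ for every representation $\pi$ of $A$.

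With this identification, the equality $\|V(f)\|=\|f\|_r$ reduces to two inequalities. Since $T\mapsto T\tensor 1$ is a $*$-homomorphism from $\L(\X)$ into the bounded operators on $\X\tensor_A H_\pi$, it is norm-decreasing; thus $\|(\Ind\pi)(f)\|\le\|V(f)\|$ for every $\pi$, and taking the supremum gives $\|f\|_r\le\|V(f)\|$. For the reverse inequality I would take a faithful representation $\pi$ of $A$, for which, as observed just before this lemma, $\|f\|_r=\|(\Ind\pi)(f)\|$. For $\phi\in\X$ and a unit vector $h\in H_\pi$ one computes
\[
\bigl\|(V(f)\tensor 1)(\phi\tensor h)\bigr\|^{2}=\bip(\pi\bigl(\Rip<V(f)\phi,V(f)\phi>\bigr)h|h),
\]
and letting $h$ range over unit vectors recovers $\bigl\|\pi\bigl(\Rip<V(f)\phi,V(f)\phi>\bigr)\bigr\|$, which by faithfulness of $\pi$ equals $\bigl\|\Rip<V(f)\phi,V(f)\phi>\bigr\|=\|V(f)\phi\|_{\X}^{2}$. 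Because $\|\phi\tensor h\|\le\|\phi\|_{\X}$ for $\|h\|=1$, dividing and taking the supremum over $\phi$ yields $\|V(f)\tensor 1\|\ge\|V(f)\|_{\L(\X)}$, whence $\|f\|_r\ge\|V(f)\|$. Together these give the final assertion $\|V(f)\|=\|f\|_r$.

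The remaining claims are then formal. The equality just proved gives $\ker V=\set{f:\|V(f)\|=0}=\set{f:\|f\|_r=0}=I_{\cs_{r}(G;\B)}$, so $V$ descends to a $*$-homomorphism $\overline V$ of $\cs_{r}(G;\B)=\cs(G;\B)/I_{\cs_{r}(G;\B)}$ into $\L(\X)$ with trivial kernel; being an injective $*$-homomorphism of \cs-algebras, $\overline V$ is isometric and realises $\cs_{r}(G;\B)$ inside $\L(\X)$.

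The main obstacle is the lower bound $\|V(f)\tensor 1\|\ge\|V(f)\|$ for faithful $\pi$, i.e.\ the isometry of Rieffel induction through a faithful representation; the rest is bookkeeping around~\eqref{eq:6} and~\eqref{eq:7}. The delicate point there is that $\sup_{\|h\|=1}\bip(\pi(a)h|h)$ computes $\|\pi(a)\|$ only for positive $a$, which is why the computation is run with $a=\Rip<V(f)\phi,V(f)\phi>\ge 0$, and that faithfulness of $\pi$ is exactly what lets one pass from $\|\pi(a)\|$ back to $\|a\|=\|V(f)\phi\|_{\X}^{2}$.
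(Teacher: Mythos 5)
Your argument is correct and takes essentially the same approach as the paper's proof: both identify $(\Ind\pi)(f)$ with $V(f)\tensor 1$ on $\X\tensor_A\H_\pi$ and rest on the computation $\|(\Ind\pi)(f)(\phi\tensor h)\|^2=\bip(\pi\bigl(\Rip<V(f)\phi,V(f)\phi>\bigr)h|h)$ combined with faithfulness of $\pi$. The only difference is one of bookkeeping: the paper picks a unit vector $h$ making that quantity at least $\tfrac12\|V(f)\phi\|^2$, concludes $\ker V=\ker(\Ind\pi)=I_{\cs_{r}(G;\B)}$, and then gets $\|V(f)\|=\|f\|_{r}$ for free because the induced injection of $\cs_{r}(G;\B)$ into $\L(\X)$ is automatically isometric, whereas you prove the norm equality directly by taking suprema over unit vectors and deduce the kernel statement from it.
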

\begin{proof}
  Let $\pi$ be a faithful representation of $A$.  Then for any $x\in
  \X$, $h\in\H_{\pi}$ and $f\in\cs(G;\B)$, we have
  \begin{equation}
    \label{eq:12}
    \bigl\|(\Ind\pi)(f)(x\tensor h)\bigr\|^{2}=\bigl\|V(f)(x)\tensor
    h\bigr\|^{2} = \bip(\pi\bigl(\Rip<V(f)(x),V(f)(x)>\bigr)h|h).
  \end{equation}
  Thus if $V(f)=0$, then $(\Ind\pi)(f)=0$.  On the other hand, given
  $x$ and $f$, we can find a unit vector $h$ such that the right-hand
  side of \eqref{eq:12} is at least
  \begin{equation*}
    \frac12\bigl\|\pi\bigl(\Rip<V(f)(x),V(f)(x)>\bigr)\bigr\|=\frac12
    \bigl\|V(f)(x)\bigr\|^{2}.
  \end{equation*}
  Therefore $\Ind\pi(f)=0$ implies that $V(f)=0$.  We have shown that
  $\ker V=\ker(\Ind\pi)$, and hence $V$ factors through an injection
  of $\cs_{r}(G;\B)$ into $\L(X)$ as claimed.
\end{proof}

We digress briefly to check that the definition of the reduced
$C^*$-algebra which we have given is compatible with existing
definitions on some special cases.

\begin{example}[The Scalar Case: Groupoid \cs-Algebras]
  \label{ex-scalar}
  Let $\B=G\times\C$ so that $\cs(G;\B)=\cs(G)$.  So $A=C_{0}(\go)$,
  and $\pi$ defined by multiplication on $L^{2}(\go,\mu)$ is a
  faithful representation of $A$. Then $\Ind\pi$ acts on the
  completion $H$ of $C_{c}(G) \odot L^{2}(\go)$ and, if we let $\nu =
  \mu \circ \lambda$, then \eqref{eq:6} becomes
  \begin{equation*}
    \bip(\phi\tensor h|\psi\tensor k)= \int_{G}
    \bip(\phi(x)h(s(x))|{\psi(x)k(s(x))} )\,d\nu^{-1}(x).
  \end{equation*}
  Hence there is a unitary $U$ from $H$ onto $L^{2}(G,\nu^{-1})$ defined by $U(\phi\tensor h)(x)=\phi(x)h(s(x))$,
  and $U$ intertwines $\Ind\pi$ with the representation
  $(\Ind\mu)(f)\xi(x)=\int_{G}f(y)\xi(y^{-1}x)\,d\lambda^{r(x)}(y)$. Hence
  our definition of the reduced norm agrees with the usual definition
  (see \cite{simwil:jot11}*{\S3}, for example), and
  $\cs_{r}(G\times\C)=\cs_{r}(G)$.
\end{example}

\begin{example}[Groupoid Crossed Products]
  \label{ex-crossed-prod}
  Suppose that $(\A,G,\alpha)$ is a dynamical system and form the
  associated semidirect product Fell bundle $\B=r^{*}\A$ as in
  \cite{muhwil:dm08}*{Example~2.1}.  Working with the appropriate
  $\A$-valued functions, as in \cite{muhwil:dm08}*{Example~2.8}, a
  quick calculation starting from~\eqref{eq:6} gives
  \begin{multline}
    \bip(f\tensor h|g\tensor k)
    = \\ \int_{\go}\int_{G} \bip(\pi_{u}\bigl(\alpha_{x}^{-1}
    \bigl(f(x)\bigr)\bigr)h(u) |
    {\pi_{u}\bigl(\alpha_{x}^{-1}\bigl(g(x)\bigr)\bigr) k(u)} ) \,
    d\lambda_{u}(x)\,d\mu(u)\label{eq:10}.
  \end{multline}
  (Since $\lambda_{u}$ is supported on $G_u$, each
  $\alpha_x^{-1}(g(x)) \in A_u$, so the integrand makes sense.)  Let
  $G*\HH_{s}$ be the pull back of $\go*\HH$ via $s$. Given a
  representation $\pi$ of $A$, there is a unitary $U$ from the space
  of $\Ind\pi$ to $L^{2}(G*\HH_{s},\nu^{-1})$ defined by $U(f\tensor
  h)=\pi_{s(x)}\bigl(\alpha_{x}^{-1}\bigl(f(x)\bigr)\bigr)h\bigl(s(x)\bigr)$. This
  $U$ intertwines $\Ind\pi$ with the representation $L^{\pi}$ given by
  \begin{equation}\label{eq:9}
    L^{\pi}(f)\xi(x)=\int_{G}
    \pi_{s(x)}\bigl(\alpha_{x}^{-1}\bigl(f(y)\bigr)\bigr)
    \xi(y^{-1}x)\,d\lambda^{r(x)} (y).
  \end{equation}
  Applying this with a faithful representation $\pi$ of $A$, we deduce
  that $\A \rtimes_{\alpha,r} G \cong \cs_{r}(G, r^*\A)$.
\end{example}

\begin{remark}
\label{rem-problem}
In Examples \ref{ex-scalar}~and \ref{ex-crossed-prod}, the essential
step in finding a concrete realization of the space of $\Ind\pi$ is
to ``distribute the $\pi_{u}$'' in the integrand in \eqref{eq:6} to
both sides of the inner product.  But for general Fell bundles,
$\pi_{u}(\phi(x))$ makes no sense for general $x\in G_{u}$.  This
often makes analyzing regular representations of Fell bundle
\cs-algebras considerably more challenging.
\end{remark}

\begin{example}
  \label{ex-pointmass}
  Any representation $\pi_u$ of $A_u$ determines a representation
  $\pi_u \circ \epsilon_u$ of $A$ by composition with evaluation at
  $u$ (in the direct-integral picture, $\pi = \pi_u \circ \epsilon_u$
  is a direct integral with respect to the point-mass $\delta_u$). We
  abuse notation slightly and write $\Ind\pi_{u}$ for $\Ind(\pi_u
  \circ\epsilon_u)$ which acts on the completion of
  $\sacgb\odot\H_{\pi_{u}}$ under
  \begin{equation}\label{eq:8}
    \ip(\phi\tensor h|\psi\tensor
    k)=\int_{G}\bip(\pi_{u}\bigl(\psi(x)^{*}\phi(x) \bigr)h(u)|{k(u)})
    \,d\lambda_{u}(x).
  \end{equation}
  Equation~\eqref{eq:8} depends only on $\phi\restr{G_{u}}$ and
  $\psi\restr{G_{u}}$; and conversely each element of
  $\sa_{c}(G_{u},\B)$ is the restriction of some $\phi\in\sacgb$ by
  the Tietz Extension Theorem for upper semicontinuous Fell bundles
  \cite{muhwil:dm08}*{Proposition~A.5}.  So we can view the space of
  $\Ind\pi_{u}$ as the completion of $\sa_{c}(G_{u},\B)\odot
  \H_{\pi_{u}}$ with respect to~\eqref{eq:8}.
\end{example}

\section{The equivalence theorem}\label{sec:equivalence theorem}

Fix for this section second-countable locally compact Hausdorff
groupoids $G$ and $H$ with Haar systems $\lg$ and $\lh$, a $G\sme H$
equivalence $Z$, Fell bundles $\pg : \B \to G$ and $\ph : \CC \to H$
and a $\B\sme\CC$ equivalence $q : \E \to Z$.  Let $\ll$ denote the
Haar system on $L$ obtained from \cite{simwil:jot11}*{Remark~11}, and
let $\linkq : \linke \to L$ be the linking bundle of
Section~\ref{sec:linking bundle}.

\begin{thm}
  \label{thm-main-reduced}
  Suppose that $F \in \sa_{c}(L,\linke)$ satisfies $f(\zeta) = 0$ for
  all $\zeta \in L \setminus G$. Let $f := F|_G \in \sacgb$.  Then
  $\|F\|_{\cs(L,\linke)}=\|f\|_{\cs(G;\B)}$ and
  $\|F\|_{\cs_{r}(L;\linke)}=\|f\|_{\cs_{r}(G;\B)}$. Moreover, $\Pg
  \cs(L,\linke) \Ph$ is a $\cs(G;\B) \sme \cs(H;\CC)$-{\ib}, and $\Pg
  \cs_{r}(L,\linke) \Ph$ is a $\cs_{r}(G;\B) \sme
  \cs_{r}(H,\CC)$-{\ib} which is the quotient module of $\Pg
  \cs(L,\linke) \Ph$ by the kernel $I_r$ of the canonical homomorphism
  of $\cs(H,\CC)$ onto $\cs_{r}(H,\CC)$.
\end{thm}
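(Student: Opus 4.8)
The plan is to reduce the entire theorem to the two claimed norm equalities and then let Remark~\ref{rem:ourproof} do the rest. Once we know that $\|F\|_{\cs(L,\linke)}=\|f\|_{\cs(G;\B)}$ for every $F\in\Pg\Gamma_c(L;\linke)\Pg$, together with the symmetric statement for $\Ph$, Remark~\ref{rem:ourproof} applied with $\alpha$ the universal norm identifies the corner $\Pg\cs(L,\linke)\Pg$ with $\cs(G;\B)$ and $\Ph\cs(L,\linke)\Ph$ with $\cs(H;\CC)$, so that $\Pg\cs(L,\linke)\Ph$ is a $\cs(G;\B)\sme\cs(H;\CC)$-{\ib}. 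Applying the same remark with $\alpha$ the reduced norm $\|\cdot\|_{\cs_r(L;\linke)}$ --- which is a pre-$\cs$-norm continuous in the inductive-limit topology --- and invoking the reduced norm equality does the same for the reduced corners. The quotient-module assertion then follows formally: the quotient map $\cs(L,\linke)\to\cs_r(L,\linke)$ restricts to the canonical surjections on the two corners and carries $X:=\Pg\cs(L,\linke)\Ph$ onto $\Pg\cs_r(L,\linke)\Ph$; since the reduced norm on the module satisfies $\|x\|^2=\|\langle x,x\rangle_{\cs(H,\CC)}\|_{\cs_r(H,\CC)}$, the elements of $X$ of reduced norm zero are exactly those $x$ with $\langle x,x\rangle_{\cs(H,\CC)}\in I_r$, i.e.\ $X\cdot I_r$. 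Hence $\Pg\cs_r(L,\linke)\Ph=X/(X\cdot I_r)$, and induction over $X$ carries $I_r$ to the corresponding ideal of $\cs(G;\B)$.

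For the full norm equality I argue by two inequalities. The bound $\|F\|_{\cs(L,\linke)}\le\|f\|_{\cs(G;\B)}$ holds because the inclusion $\Gamma_c(G;\B)\hookrightarrow\Gamma_c(L;\linke)$ is continuous for the inductive-limit topologies, so $f\mapsto\|f\|_{\cs(L,\linke)}$ is an inductive-limit-continuous pre-$\cs$-norm on $\Gamma_c(G;\B)$ and is therefore dominated by the universal norm $\|\cdot\|_{\cs(G;\B)}$ by the Disintegration Theorem \cite{muhwil:dm08}*{Theorem~4.13} applied to $\B$ over $G$ (cf.\ Remark~\ref{rem:ourproof}). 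For the reverse inequality I extend representations: given a nondegenerate representation $\rho$ of $\cs(G;\B)$, let $\sigma$ be the representation of $\cs(H;\CC)$ induced from $\rho$ through the dual of the Muhly--Williams {\ib} $X$ of \cite{muhwil:dm08}*{Theorem~6.4}, acting on $\widetilde X\otimes_{\cs(G;\B)}\H_\rho$. The {\ib} axioms allow one to assemble $\rho$, $\sigma$ and the module actions of $\Gamma_c(Z;\E)$ and $\Gamma_c(\zop;\eop)$ into a $*$-representation $R$ of the matrix $*$-algebra $\Gamma_c(L;\linke)$ on $\H_\rho\oplus\H_\sigma$; since $R$ is inductive-limit continuous it extends to $\cs(L,\linke)$, and $R$ restricts to $\rho$ on the $G$-corner. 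Thus $\|F\|_{\cs(L,\linke)}\ge\|R(F)\|=\|\rho(f)\|$, and taking the supremum over $\rho$ yields $\|F\|_{\cs(L,\linke)}\ge\|f\|_{\cs(G;\B)}$.

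For the reduced norm equality I compute regular representations of $\cs(L,\linke)$ directly. By Lemma~\ref{lem:linking bundle} the unit-fibre algebra of $\linke$ is $A\oplus D$, so a representation of it is a pair $(\pi_A,\pi_D)$ and $\|F\|_{\cs_r(L;\linke)}=\sup\|\Ind(\pi_A,\pi_D)(F)\|$. Writing the inducing space as in \eqref{eq:6}, the sections supported over the four pieces $G$, $Z$, $\zop$, $H$ of $L$ are mutually orthogonal, giving $\mathcal K=\mathcal K_G\oplus\mathcal K_Z\oplus\mathcal K_{\zop}\oplus\mathcal K_H$. Because $F$ is supported on $G$, formula \eqref{eq:V-def} shows that $\Ind(\pi_A,\pi_D)(F)$ is block diagonal: it acts as $(\Ind\pi_A)(f)$ on $\mathcal K_G$, as the left module action $\phi\mapsto f\cdot\phi$ on $\mathcal K_Z$, and as $0$ on $\mathcal K_{\zop}\oplus\mathcal K_H$, so $\|\Ind(\pi_A,\pi_D)(F)\|=\max(\|(\Ind\pi_A)(f)\|,\|f\cdot{}\|_{\mathcal K_Z})$. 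The first term already gives $\|F\|_{\cs_r(L;\linke)}\ge\|f\|_{\cs_r(G;\B)}$, so everything comes down to bounding the $\mathcal K_Z$ term, and this is where I expect the main obstacle. Since the left $G$-action preserves $s_Z$, the operator $f\cdot{}$ preserves each fibre $Z_\omega=s_Z^{-1}(\omega)$, and $\mathcal K_Z$ disintegrates over $\ho$ as $\int^{\oplus}_{\ho}\mathcal K_Z^\omega\,d\mu_D(\omega)$. Because $Z$ is a $G\sme H$ equivalence each $Z_\omega$ is a single free $G$-orbit, so fixing $z_0\in Z_\omega$ with $u_0=r(z_0)$ identifies $Z_\omega$ with $G_{u_0}=s^{-1}(u_0)$ and, via $\E_{g\cdot z_0}\cong B_g\otimes_{A_{u_0}}\E_{z_0}$, identifies $\mathcal K_Z^\omega$ with the space of the point-mass regular representation $\Ind\pi'_{u_0}$ of Example~\ref{ex-pointmass}, where $\pi'_{u_0}=\E_{z_0}\text{-}\Ind\pi_{D,\omega}$, with $f\cdot{}$ going over to $(\Ind\pi'_{u_0})(f)$. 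Granting this, $\|f\cdot{}\|_{\mathcal K_Z^\omega}\le\|f\|_{\cs_r(G;\B)}$ for every $\omega$, hence $\|f\cdot{}\|_{\mathcal K_Z}\le\|f\|_{\cs_r(G;\B)}$, and the two inequalities combine to the reduced equality. The delicate step is precisely this fibrewise identification: one must match the inner product of $\mathcal K_Z^\omega$ built from $\rip\CC<\cdot,\cdot>$ and $\pi_{D,\omega}$ with \eqref{eq:8} under the change of variables $z=g\cdot z_0$ and the {\ib} structure of $\E_{z_0}$, together with the compatibility of the Haar-system data on $L$ with $\lg$ and $\mu_D$. As Remark~\ref{rem-problem} warns, one cannot simply ``distribute the $\pi_u$'' through the integrand, so the identification must be carried out intrinsically at the level of the imprimitivity-bimodule inner products rather than through a concrete realisation of the representation space.
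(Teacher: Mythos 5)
Your proposal follows the paper's architecture almost exactly: the reduction of everything to the two norm equalities via Remark~\ref{rem:ourproof}, the full-norm equality by restriction of representations in one direction and the block-matrix extension $R$ on $\H_\rho\oplus\H_\sigma$ in the other (this is precisely the argument of \cite{simwil:jot11}*{Proposition~15} that the paper invokes), and the reduced-norm equality by decomposing the regular representations of $\cs(L,\linke)$ into blocks indexed by $G$, $Z$, $\zop$, $H$, observing that $F$ acts as $(\Ind\pi_A)(f)$ on the $G$-block, as $f\cdot{}$ on the $Z$-block, and as zero elsewhere. The one place you genuinely diverge is the step you correctly flag as delicate: bounding the $Z$-block by $\|f\|_{\cs_r(G;\B)}$. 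The paper isolates this as Proposition~\ref{prop-key} (following \cite{moutu:xx11}*{Proposition~4.3}): it builds the Hilbert $D$-module $\Y$ of Lemma~\ref{lem-preD}, localises at $v\in\ho$, and compares $M_v(f)$ with $V_u(f)$ on $\X(u)$ via the maps $U_e(\phi)(x)=\lip\B<\phi(x\cdot\zeta),e>$, taking a supremum over unit vectors $e\in\E_\zeta$ to recover the norm; this sidesteps the need for any unitary identification of the fibre module with a concrete induced-representation space. Your alternative --- identifying $\mathcal{K}_Z^\omega$ unitarily with the point-mass regular representation $\Ind\pi'_{u_0}$ of Example~\ref{ex-pointmass} for $\pi'_{u_0}=\E_{z_0}\text{-}\Ind\pi_{D,\omega}$ via $\tilde\phi\otimes(e\otimes h)\mapsto(\tilde\phi\cdot e)\otimes h$ --- does work: the inner products match by the {\ib} compatibility axioms and the invariance of the measures $\rho^v$, and $\pi'_{u_0}\circ\epsilon_{u_0}$ is a representation of $A$, so its induced representation is dominated by $\|\cdot\|_r$. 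What it costs you, relative to the paper's $U_e$ argument, is the additional density claim that sections of the form $\tilde\phi\cdot e$ with $\tilde\phi\in\sa_c(G_{u_0},\B)$ and $e\in\E_{z_0}$ span a dense subspace of $\mathcal{K}_Z^\omega$ (which follows from \cite{muhwil:dm08}*{Proposition~6.10} but must be said), whereas the paper only needs the one-sided contractivity $\|U_e(\phi)\|\le\|\phi\|$ together with near-attainment of the norm in \eqref{eq:16}. Both routes are sound; the paper's is slightly more economical and is the form in which the estimate appears in \cite{moutu:xx11}.
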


\begin{remark}
  \label{rmk-scalar-case}
Recall the set-up of Example~\ref{ex-scalar}. It is not difficult to
see that if $G$ and $H$ are groupoids and $Z$ is a $G\sme H$
equivalence, then the trivial bundle $Z \times \C$ is a $(G \times
\C)\sme(H \times \C)$ equivalence. Hence we recover Theorem~13,
Proposition~15 and Theorem~17 of \cite{simwil:jot11} from
Theorem~\ref{thm-main-reduced}.
\end{remark}

To prove Theorem~\ref{thm-main-reduced}, we first establish some
preliminary results. Our key technical result is a norm-estimate for
the representations of $\sacgb \subseteq \sa_{c}(L,\linke)$ coming
from elements of $H^0$.

Let $\sset{\rho_{Z}^{v}}_{v\in\ho}$ be the Radon measures on $Z$
introduced in \cite{simwil:jot11}*{Theorem~13}.  For each $v\in\ho$,
fix $\zeta \in Z$ with $s(\zeta) = v$, and define a $D(v)$-valued form
on $\Y_{0}=\sa_{c}(Z,\E)$ by
\begin{equation}
  \label{eq:13}
  \ripd<\phi,\psi>(v)
  = \int_{G}\rip\CC<\phi(x^{-1}\cdot \zeta), \psi(x^{-1}\cdot
  \zeta)>\,d\lambda^{r(\zeta)} (x).
\end{equation}
Left-invariance of $\rho$ implies that this formula does not depend on
the choice of $\zeta\in Z$ such that $s(\zeta)=v$. The map
$(\phi,\psi) \mapsto \ripd<\phi,\psi>(v)$ is the restriction to $\lo$
of the product $\phi^*\psi$ computed in $\sa_c(L, \linke)$.

The following lemma constructs what is essentially an ``integrated
form'' of the modules used in \cite{moutu:xx11}*{Proposition~4.3}.

\begin{lemma}
  \label{lem-preD}
  With respect to the pre-inner product~\eqref{eq:13}, $\Y_{0}$ is a
  pre-Hilbert $D$-module whose completion, $\Y$ is a full right
  Hilbert $D$-module.
\end{lemma}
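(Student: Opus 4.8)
The plan is to extract the algebraic properties of $\ripd<\cdot,\cdot>$ directly from the linking bundle, and then to reduce the two analytic points---positivity and fullness---to statements in the individual fibres. As noted just before the lemma, $\ripd<\phi,\psi>$ is the restriction to $\ho\subseteq\lo$ of the convolution product $\phi^{*}\psi$ formed in $\Gamma_{c}(L,\linke)$, with $\phi,\psi$ sitting in the $Z$-corner. Because $\Gamma_{c}(L,\linke)$ is a $*$-algebra (Lemma~\ref{lem:linking bundle}), this at once gives that $\ripd<\phi,\psi>\in\Gamma_{c}(\ho,\linkq^{-1}(\ho))\subseteq D$, that $(\phi,\psi)\mapsto\ripd<\phi,\psi>$ is conjugate-linear in the first variable and linear in the second, and that $\ripd<\phi,\psi>^{*}=\ripd<\psi,\phi>$ (restriction to the unit space $\ho$ carries the involution of $\Gamma_{c}(L,\linke)$ to the fibrewise $C^{*}$-involution of $D$). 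I would give $\Y_{0}=\Gamma_c(Z,\E)$ the pointwise right $D$-action $(\phi\cdot d)(z)=\phi(z)\,d(s(z))$, which makes sense since each $\E_{z}$ is a right $D_{s(z)}$-module; the identity $\ripd<\phi,\psi\cdot d>=\ripd<\phi,\psi>\,d$ then follows from the corresponding property of the right inner product on the {\ib} $\E_{x^{-1}\cdot\zeta}$, integrated over $x$.

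For positivity I would argue one fibre at a time. Fix $v\in\ho$ and $\zeta\in Z$ with $s(\zeta)=v$. Every $x$ in the support of $\lambda^{r(\zeta)}$ satisfies $s(x^{-1}\cdot\zeta)=v$, so in~\eqref{eq:13} the integrand is a norm-continuous, compactly supported function of $x$ with values in the single $C^{*}$-algebra $D_{v}$, and each value $\rip\CC<\phi(x^{-1}\cdot\zeta),\phi(x^{-1}\cdot\zeta)>$ is positive in $D_{v}$, being a right inner product on the imprimitivity bimodule $\E_{x^{-1}\cdot\zeta}$. As the positive cone of $D_{v}$ is closed and convex, the integral $\ripd<\phi,\phi>(v)$ lies in it. Since positivity in the $C_{0}(\ho)$-algebra $D$ is detected fibrewise, $\ripd<\phi,\phi>\ge 0$ in $D$. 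Thus $\Y_{0}$ is a pre-Hilbert $D$-module, and quotienting by the null vectors and completing produces the right Hilbert $D$-module $\Y$.

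It remains to prove fullness, which I expect to be where the real work lies. The closed span $I:=\clsp\set{\ripd<\phi,\psi>:\phi,\psi\in\Y_{0}}$ is a closed two-sided ideal of $D$, hence a $C_{0}(\ho)$-submodule. A standard partition-of-unity argument for the $C_{0}(\ho)$-algebra $D$ then shows that $I=D$ as soon as the image of $I$ in every fibre $D_{v}$ is dense: given $d\in D$ and $\varepsilon>0$, one approximates $d$ to within $\varepsilon$ near each point of the compact set $\supp d$ by an element of $I$ and patches the approximants together using a partition of unity from $C_{c}(\ho)$ subordinate to the resulting cover (upper semicontinuity of the norm makes the relevant sets open). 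So it suffices, for each $v$ and each $\zeta$ with $s(\zeta)=v$, to see that $\lsp\set{\ripd<\phi,\psi>(v):\phi,\psi\in\Y_{0}}$ is dense in $D_{v}$.

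For this fibrewise density, the {\ib} $\E_{\zeta}$ is full on the right, so $D_{v}=\clsp\set{\rip\CC<e,f>:e,f\in\E_{\zeta}}$, and it is enough to approximate a single $\rip\CC<e,f>$. Choosing $\phi_{0},\psi\in\Y_{0}$ with $\phi_{0}(\zeta)=e$ and $\psi(\zeta)=f$ and putting $\phi=b\,\phi_{0}$ for a nonnegative $b\in C_{c}(Z)$ supported near $\zeta$ and normalised so that $\int_{G}b(x^{-1}\cdot\zeta)\,d\lambda^{r(\zeta)}(x)=1$, we obtain
\[
\ripd<\phi,\psi>(v)=\int_{G}b(x^{-1}\cdot\zeta)\,\rip\CC<\phi_{0}(x^{-1}\cdot\zeta),\psi(x^{-1}\cdot\zeta)>\,d\lambda^{r(\zeta)}(x).
\]
The map $x\mapsto\rip\CC<\phi_{0}(x^{-1}\cdot\zeta),\psi(x^{-1}\cdot\zeta)>$ is a continuous $D_{v}$-valued function whose value at the unit $r(\zeta)$ is $\rip\CC<e,f>$, and the normalisation is possible because the pushforward of $\lambda^{r(\zeta)}$ along the homeomorphism $x\mapsto x^{-1}\cdot\zeta$ of $G^{r(\zeta)}$ onto $s_{Z}^{-1}(v)$ has full support. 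Letting $\supp b$ shrink to $\zeta$, the usual approximate-identity estimate forces $\ripd<\phi,\psi>(v)\to\rip\CC<e,f>$, giving the required density. The main obstacle throughout is making this mollifier approximation, together with the partition-of-unity reduction to fibres, fully rigorous.
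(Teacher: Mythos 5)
Your proof is correct and follows essentially the same route as the paper's: identify $\ripd<\cdot,\cdot>$ with the restriction to $\lo$ of the convolution product in $\Gamma_c(L,\linke)$, verify positivity fibrewise, and deduce fullness from fibrewise density combined with the fact that the closed span of the inner products is a $C_0(\ho)$-submodule of $D$ (the paper invokes \cite{wil:crossed}*{Proposition~C.24} where you run the partition-of-unity argument by hand). The only real difference is that the paper simply asserts that each $\E_z$ being an {\ib} makes the range of $\ripd<\cdot,\cdot>(v)$ all of $D_v$, whereas you supply the mollifier approximation that actually justifies this fibrewise density.
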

\begin{proof}
  That~\eqref{eq:13} takes values in $D$ follows from the observation
  above that $\ripd<\phi,\psi> = (\phi^*\psi)|_{\lo}$. Since each
  $E_z$ is an imprimitivity bimodule, the range of
  $\ripd<\cdot,\cdot>(v)$ is all of $D_v$. Since $D$ is a
  $C_0(\ho)$-algebra, to see that $X := \clsp\set{\ripd<\phi,\psi> :
    \phi,\psi \in \Y_{0}} = D$, it therefore suffices to show that $X$
  is a $C_0(\ho)$-module (see, for example
  \cite{wil:crossed}*{Proposition~C.24}), for which one uses the right
  action of $C_0(\ho)$ on $\Y_{0}$ to check that
  $(\ripd<\phi,\psi>\cdot f)(v) := f(v) \ripd<\phi,\psi>(v)$ is
  bilinear from $X \times C_0(\ho)$ to $X$. An argument like that of
  page~\pageref{page:multiplier} shows that for $f \in \sa_{c}(\lo,
  \linkq^{-1}(\lo))$, $M_f(\psi)(g) := f(r(g))\psi(g)$ determines a
  multiplier of $\cs(L; \linke)$, so $\Y_{0}$ is a pre-Hilbert
  $D$-module which is full since $X = D$.
\end{proof}

\begin{remark}
  \label{rem-norms}
  Since $D$ is a $C_{0}(\ho)$-algebra, to each $v\in\ho$ there
  corresponds a quotient module
  \begin{equation*}
    \Y(v):=Y/Y\cdot I_{v},
  \end{equation*}
  where $I_{v}=\set{d\in D:d(v)=0}$.  As in
  \cite{rw:morita}*{Proposition~3.25}, $\Y(v)$ is a right Hilbert
  $D(v)$-module: if we denote by $x(v)$ the image of $x$ in $\Y(v)$,
  then we have $\brip D(v)<x(v),y(v)>=\ripd <x,y>(v)$. Since
  $\|y\|^{2}=\|\ripd <y,y>\|$, we obtain
  $\|y\|=\sup_{v\in\ho}\|y(v)\|$. Indeed, the $\Y(v)$ are isomorphic
  to the modules used in \cite{moutu:xx11}*{Proposition~4.3}.

  Fix $T \in \L(\Y)$. Since $T$ is $D$-linear, for each $v \in \ho$
  there is an adjointable operator $T_{v}$ on $\Y(v)$ satisfying
  $T_{v}(x(v))=(Tx)(v)$ for all $x \in \Y$. Since $\brip
  D(v)<T_{v}(x(v)),y(v)> = \ripd <Tx,y>(v)$, we have
  $\|T\|=\sup_{v\in\ho}\|T_{v}\|$.
\end{remark}

\begin{prop}[\cite{moutu:xx11}*{Proposition~4.3}]
  \label{prop-key}
  There is a homomorphism $M$ from $\cs(G;\B)$ to $\L(\Y)$ such that if
  $f\in\sacgb$ and $\phi\in\Y_{0}$, then
  \begin{equation}
    \label{eq:14}
    M(f)\phi(\zeta)=\int_{G}f(x)\phi(x^{-1}\cdot
    \zeta)\,d\lambda^{r(\zeta)}(x) .
  \end{equation}
  We have $I_{\cs_{r}(G;\B)} \subset \ker M$, and $M$ factors through
  $\cs_{r}(G;\B)$.  In particular, $\|M(f)\|\le\|f\|_{r}$.
\end{prop}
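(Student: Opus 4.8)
The plan is to realise $M(f)$ as the left convolution action of $f$, viewed inside the linking algebra, on the corner $\Y_{0}$, and then to analyse its fibrewise norm. First I would observe that, just as $\ripd<\phi,\psi> = (\phi^{*}\psi)|_{\lo}$, the right-hand side of \eqref{eq:14} is exactly the product $f\phi$ computed in $\sa_c(L,\linke)$, where $f \in \Pg\sa_c(L,\linke)\Pg = \sacgb$ and $\phi \in \Pg\sa_c(L,\linke)\Ph = \Y_{0}$. Associativity of this product then gives the formal-adjoint identity $\ripd<M(f)\phi,\psi> = \ripd<\phi,M(f^{*})\psi>$ together with multiplicativity of $f \mapsto M(f)$ on $\Y_{0}$. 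As in the construction of $V$ in \eqref{eq:V-def}, the map $f \mapsto \ripd<M(f)\phi,\psi>$ is continuous for the inductive-limit topology and the vectors $M(f)\phi$ span a dense subspace of $\Y_{0}$, so \cite{kmqw:nyjm10}*{Proposition~1.7} shows that $M$ is bounded and extends to a nondegenerate homomorphism $\cs(G;\B) \to \L(\Y)$ satisfying \eqref{eq:14}.

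It then remains to prove the norm estimate $\|M(f)\| \le \|f\|_{r}$, which is where the real work lies. By Remark~\ref{rem-norms} we have $\|M(f)\| = \sup_{v\in\ho}\|M(f)_{v}\|$, so I would fix $v \in \ho$, a point $\zeta \in Z$ with $s(\zeta) = v$, and a faithful representation $\tau$ of $D_{v}$. Since $\Y$ is full over $D$ by Lemma~\ref{lem-preD}, the quotient $\Y(v)$ is full over $D_{v}$, and hence the amplification $T \mapsto T\otimes 1$ is an isometric representation of $\L(\Y(v))$ on $\Y(v)\otimes_{D_{v}}\H_{\tau}$; in particular $\|M(f)_{v}\| = \|M(f)_{v}\otimes 1\|$.

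The main obstacle is to identify the amplified fibre representation $f \mapsto M(f)_{v}\otimes 1$ with a regular representation of $\cs(G;\B)$. The key is that each $E_{z}$ is an $A_{r(z)}\sme D_{s(z)}$-{\ib}, so after the substitution $y = x^{-1}$ in \eqref{eq:13} the faithful representation $\tau$ of $D_{v} = D_{s(y\cdot\zeta)}$ induces, via $E_{y\cdot\zeta}$, a representation $\pi_{y\cdot\zeta}$ of $A_{r(y\cdot\zeta)} = A_{r(y)}$ on $E_{y\cdot\zeta}\otimes_{D_{v}}\H_{\tau}$. Assembling these over the source fibre $G_{r(\zeta)}$ against $\lambda_{r(\zeta)}$ yields a representation $\pi$ of $A = \sa_{0}(\go,\B)$, fibred over $\go$ via $y \mapsto r(y)$ with the corresponding push-forward measure. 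A direct comparison of the inner product \eqref{eq:13} and the action \eqref{eq:14} with the formulas \eqref{eq:6}--\eqref{eq:7} for $\Ind\pi$ (cf.\ Example~\ref{ex-pointmass}) then exhibits a unitary intertwining $M(-)_{v}\otimes 1$ with $\Ind\pi$. Granting this, $\|M(f)_{v}\| = \|(\Ind\pi)(f)\| \le \|f\|_{r}$, and taking the supremum over $v$ gives $\|M(f)\| \le \|f\|_{r}$; in particular $I_{\cs_{r}(G;\B)} \subseteq \ker M$ and $M$ factors through $\cs_{r}(G;\B)$. I expect the genuinely delicate point to be the bookkeeping in this last identification --- converting the $D_{v}$-valued data into an $A$-representation through the imprimitivity-bimodule fibres and matching the measure-theoretic disintegration of $\lambda^{r(\zeta)}$ with the direct-integral description of $\Ind\pi$.
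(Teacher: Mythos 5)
Your first paragraph reproduces the paper's construction of $M$ essentially verbatim: the adjoint identity $\ripd<M(f)\phi,\psi>=\ripd<\phi,M(f^{*})\psi>$ from associativity in $\Gamma_c(L;\linke)$, inductive-limit continuity, density of $\operatorname{span}\set{M(f)\phi}$ via the approximate identities of \cite{muhwil:dm08}*{Proposition~6.10}, and \cite{kmqw:nyjm10}*{Proposition~1.7}; the reduction to $\|M_{v}(f)\|\le\|f\|_{r}$ via Remark~\ref{rem-norms} also matches. The gap is in your identification of $M(\cdot)_{v}\otimes 1$ with a regular representation. The representation $\pi$ you propose --- a direct integral over the orbit of $\zeta$ of the representations of $A_{r(y)}$ induced from $\tau$ via $E_{y\cdot\zeta}$, assembled against $\lambda_{r(\zeta)}$ --- is the wrong one: the integral over $G^{r(\zeta)}$ in~\eqref{eq:13} is exactly the integral that already appears in~\eqref{eq:8} defining the inner product on the space of an induced representation, so building it a second time into $\H_{\pi}$ double-counts it. Concretely, your $\H_{\pi}$ is isomorphic to $\Y(v)\otimes_{D(v)}\H_{\tau}$ itself, but $\Ind\pi$ acts on $\X\otimes_{A}\H_{\pi}$, not on $\H_{\pi}$. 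In the toy case where $G=H$ is a finite group $\Gamma$ and all bundles are trivial lines, your $\pi$ is the $|\Gamma|$-fold multiple of the identity representation of $A=\C$, so $\Ind\pi$ acts on $\ell^{2}(\Gamma)\otimes\ell^{2}(\Gamma)$, while $M(\cdot)_{v}\otimes 1$ is the left regular representation on $\ell^{2}(\Gamma)$: the claimed intertwining unitary does not exist (the norms happen to agree, but that is what you are trying to prove). The correct choice is the \emph{single} representation $\pi_{u}$ of $A_{u}$, $u=r(\zeta)$, induced from $\tau$ via the one fibre $E_{\zeta}$, viewed as a representation of $A$ through evaluation at $u$ as in Example~\ref{ex-pointmass}; then $\phi\otimes e\otimes h\mapsto\bigl(z\mapsto\phi(\g[z,\zeta])\cdot e\bigr)\otimes h$ is an isometry from $\X(u)\otimes_{A_{u}}E_{\zeta}\otimes_{D(v)}\H_{\tau}$ into $\Y(v)\otimes_{D(v)}\H_{\tau}$ intertwining $\Ind\pi_{u}(f)$ with $M_{v}(f)\otimes 1$. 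Even then you are not done: an isometry intertwining in that direction only gives $\|\Ind\pi_{u}(f)\|\le\|M_{v}(f)\|$, so you must also prove the range is dense (another approximate-identity argument using \cite{muhwil:dm08}*{Proposition~6.10} and fullness of $E_{\zeta}$) before you can conclude $\|M_{v}(f)\|\le\|f\|_{r}$.

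For comparison, the paper's proof avoids Hilbert-space representations entirely and works at the level of Hilbert modules. For each $e\in q^{-1}(\zeta)$ it defines $U_{e}:\Y(v)\to\X(u)$ by $U_{e}(\phi)(x)=\lip\B<\phi(x\cdot\zeta),e>$, computes that $\Rip<U_{e}(\phi),U_{e}(\phi)>(u)$ equals the compression of $\ripd<\phi,\phi>(v)$ by $e$ --- so $U_{e}$ is contractive for $\|e\|\le 1$ and, because $E_{\zeta}$ is an imprimitivity bimodule, nearly isometric on any fixed $\phi$ for a well-chosen $e$ --- and checks $U_{e}\circ M(f)=V_{u}(f)\circ U_{e}$. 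An $\epsilon$-argument then gives $\|M_{v}(f)\|\le\|V_{u}(f)\|\le\|f\|_{r}$ with no density or disintegration step. Your strategy is repairable along the lines above, but the ``bookkeeping'' you deferred is precisely where the proposal currently fails.
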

\begin{proof}
  Direct computation shows that
  \begin{equation*}
    \ripd<M(f)\phi,\psi>=\ripd<\phi,M(f^{*})\psi>\quad\text{for
      $f\in\sacgb$ and $\phi,\psi\in\sa_{c}(Z,\E)$}.
  \end{equation*}
  A calculation using Remark~\ref{rem-norms}, the Cauchy-Schwartz
  inequality for Hilbert modules (\cite{rw:morita}*{Lemma~2.5}) and
  the characterization of inductive-limit topology
  continuous maps
  out of $C_{c}(G)$ in terms of eventually compactly supported uniform
  convergence
  shows that $f \mapsto \ripd<M(f) \phi,\psi>$ is continuous in the
  inductive-limit topology. The existence of approximate identities as
  in \cite{muhwil:dm08}*{Proposition~6.10} then implies that
  $\operatorname{span}\set{M(f)\phi:\text{$f\in\sacgb$ and
      $\phi\in\Y_{0}$}}$ is dense in $\Y$ in the inductive limit
  topology, so \cite{kmqw:nyjm10}*{Proposition~1.7} implies that $M$
  is bounded and extends to $\cs(G;\B)$.

  Since $\|M(f)\|=\sup_{v\in\ho}\|M_{v}(f)\|$, it now suffices to show
  that
  \begin{equation}
    \label{eq:15}
    \|M_{v}(f)\|\le\|f\|_{r}\quad\text{for all $v\in\ho$.}
  \end{equation}

  Fix $v\in\ho$ and choose $\zeta\in Z$ such that $s(\zeta)=v$.  Let
  $u=r(\zeta)$.  For any $e\in q^{-1}(\zeta)$ we have $\blip
  \B<\phi(x\cdot \zeta),e> \in B(\g[x\cdot \zeta,\zeta])=B(x)$.  Thus
  we can define
  \begin{equation*}
    U_{e}:\sa_{c}(Z,\E)\to \sa_{c}(G_{u},\B)\quad
    \text{by}\quad
    U(\phi)(x)=\blip \B<\phi(x\cdot \zeta),e>.
  \end{equation*}
  Just as in Remark~\ref{rem-norms}, we can form the quotient module
  $\X(u)$, and the map $V:\cs(G;\B)\to\L(\X)$ from
  Lemma~\ref{lem-x-red} gives operators $V_{u}(f)\in\L(\X(u))$ such
  that $\|V_{u}(f)\|\le\|f\|_{r}$.  The inner product
  $\Rip<\phi,\psi>(u)$ depends only on the $\phi\restr{G_{u}}$ and
  $\psi\restr{G_{u}}$, and every element of $\sa_{c}(G_{u},\B)$
  extends to an element of $\sacgb$ by the Tietz Extension Theorem for
  upper semicontinuous Banach bundles
  \cite{muhwil:dm08}*{Proposition~A.5}. So we can view $U_{e}$ as a
  map from $\sa_{c}(Z,\E)$ to $\X(u)$.

  Using that $\blip\B<\phi(x^{-1}\zeta),e>^{*}
  \blip\B<\phi(x^{-1}\cdot\zeta),e> = \blip\B<e\cdot
  {\rip\CC<\phi(x^{-1}\cdot \zeta),\phi(x^{-1}\cdot \zeta)>},e>$, one
  computes to see that
  \begin{equation}\label{eq:16}
    \Rip<U_{e}(\phi),U_{e}(\phi)>(u) = \brip\B<e\cdot \ripd<\phi,\phi>(v),e>.
  \end{equation}
  So if $\|e\| \le 1$, the Cauchy-Schwartz inequality for Hilbert
  modules (\cite{rw:morita}*{Lemma~2.5}) implies that
  $\|U_{e}(\phi)\|_{\X(u)}\le \|\phi\|_{\Y(v)}$.

  For $x\in G_{u}$, that the pairing $\blip\B<\cdot,\cdot>$ is
  $A$-linear in the first variable gives
  \[
  U_{e}(M(f)\phi)(x) = \int_{G}\;\blip\B<f(y)\phi(y^{-1}x\cdot
  \zeta),e>\,d\lambda^{r(x)}(y) = V_{u}(f)U(\phi)(x).
  \]

  Fix $f\in\sacgb$ and $\epsilon>0$. Fix $\phi\in\sa_{c}(Z,\E)$ such
  that $\|\phi\|_{\Y(v)}=1$ and such that
  $\|M(f)\phi\|_{\Y(v)}>\|M_{v}(f)\|-\epsilon$. By~\eqref{eq:16},
  there exists $e\in q^{-1}(\zeta)$ with $\|e\|=1$ such that
  \begin{equation*}
    \|U_{e}(M(f)\phi)\|_{\X(u)}>\|M(f)\phi\|_{\Y(v)}-\epsilon.
  \end{equation*}
  Hence
  \[
  \|M_{v}(f)\|-2\epsilon < \|M(f)\phi\|_{\Y(v)}-\epsilon <
  \|U_{e}(M(f)\phi)\|_{\X(u)} = \|V_{u}(f)U_{e}(\phi)\|_{\X(u)}
  \le\|f\|_{r}.
  \]
  Letting $\varepsilon \to 0$ gives~\eqref{eq:15}.
\end{proof}

\begin{proof}[Proof of Theorem~\ref{thm-main-reduced}]
  Since every representation of $\cs(L;\linke)$ restricts to a
  representation of $\cs(G;\B)$, we have $\|F\|_{\cs(L;\linke)} \le
  \|f\|_{\cs(G;\B)}$, so we just have to establish the reverse
  inequality. The argument for this is nearly identical to that of
  \cite{simwil:jot11}*{Proposition~15}. The key differences are that:
  \cite{muhwil:dm08}*{Theorem~6.4} is used in place of
  \cite{muhwil:nyjm08}*{Theorem~5.5}; and
  \cite{muhwil:dm08}*{Proposition~6.10} is used to obtain an
  approximate identity for both $\sacgb$ and $\sa_{c}(Z,\E)$ which can
  be used in place of the approximate identity in $\bundlefont{C}(L)$
  to establish the analogue of \cite{simwil:jot11}*{Equation~(10)} and
  to complete the norm approximation at the end of the proof.

  We now turn to the proof that the reduced norms agree. Fix faithful
  representations $\pi_u$ of the $A(u)$ and $\tau_v$ of the
  $D(v)$. Then
  \begin{equation*}
    \|F\|_{\cs_{r}(L;\linke)}
    = \max\Bigl\{\sup_{u\in\go}\|(\Ind^{L}\pi_{u})(F)\|, \sup_{v\in\ho}\|(\Ind\tau_{v})(F)\| \Bigr\}.
  \end{equation*}

  Fix $u\in\go$. Let $\H_{1}$ be the space of $\Ind^{G}\pi_{u}$; that
  is, the completion of $\sa_{c}(G_{u},\B)\odot\H_{\pi_{u}}$ as in
  Example~\ref{ex-pointmass}. The representation $\Ind^{L}\pi_{u}$
  acts on the completion of $\sa_{c}(L_{u},\linke)\odot \H_{\pi_{u}}$
  which decomposes as $\H_{1}\oplus\H_{2}$ where $\H_{1} =
    \overline{\sa_{c}(G_{u},\linke)\odot \H_{\pi_{u}}}$ and $\H_{2} =
    \overline{\sa_{c}(Z_{u}, \linke) \odot \H_{\pi_u}}$. Moreover, the
  restriction of $\Ind^L\pi_u$ to $H_2$ is the zero
  representation. Hence
  \begin{equation*}
    \|F\|_{\cs_{r}(L;\linke)} \ge \sup_{u \in \go}
    \|\Ind^{L}\pi_{u}(f)\| = \|f\|_{\cs_{r}(G;\B)},
  \end{equation*}
  and it suffices now to establish that $\|\Ind^L\tau_{v}(F)\| \le
  \sup_u\|\Ind^{L}\pi_{u}(F)\|$ for all $F \in \sa_{c}(L,\linke)$ and
  $v \in \ho$.

  Fix $v\in\ho$. Then $\Ind^{L}\tau_{v}$ acts on the completion of
  $\sa_{c}(L_{v},\linke)\odot \H_{\tau_{v}}$ which again decomposes as
  a direct sum $\H_{3}\oplus \H_{4}$ (here $\H_3 =
    \overline{\sa_c(\zop_{v}, \linke) \odot \H_{\tau_{v}}}$ and $\H_4 =
    \overline{\sa_c(H_v, \linke) \odot \H_{\tau_{v}}}$). The
  restriction to $\H_{4}$ is zero, and $\H_{3}$ is the completion of
  $\sa_{c}(Z,\E)$ under
  \begin{equation*}
    \ip(\phi\tensor h|\psi\tensor k)=\int_{Z}
    \bip(\tau_{v}\bigl(\brip\CC<\psi(\zeta), \phi(\zeta)>\bigr)h|k) \,
    d\rho^{v}(\zeta).
  \end{equation*}
  An inner-product computation shows that if $\Y$ is the Hilbert
  $D$-module of Lemma~\ref{lem-preD}, then $\H_{3}$ is isomorphic to
  the completion of $\Y\odot \H_{\tau_{v}}$ under
  \begin{equation*}
    \ip(\phi\tensor h|\psi\tensor k)
    =\bip(\tau_{v}\bigl(\ripd<\psi,\phi>(v)\bigr) h|k),
  \end{equation*}
  and then the restriction of $(\Ind^{L}\tau_{v})(F)$ to $\H_{3}$ is
  $\yind\tau_{v}$. Hence $\|\Ind^{L}\tau_{v}(F)\| =
  \|\yind\tau_{v}(f)\|$. Since $\yind\tau_{v}[x\tensor h] =
  [M(f)x\tensor h]$ for all $x$, we have $\ker M\subset \ker
  \yind\tau_{v}$. Hence Proposition~\ref{prop-key}, implies that
  $\|\yind\tau_{v}(f)\|\le\|f\|_{\cs_{r}(G;\B)}$ as required.

  The final statement follows from \cite{rw:morita}*{Theorem~3.22}.
\end{proof}

\section{The reduced symmetric imprimitivity theorem}

Suppose that $K$ and $H$ are locally compact groups acting freely and
properly on the left and right, respectively, of a locally compact
space $P$.  Suppose also that we have commuting actions $\alpha$ and
$\beta$ of $K$ and $H$, respectively, on a \cs-algebra $D$.  Then we
can form the induced algebras $\Ind_{H}^{P}(D,\beta)$ and
$\Ind_{K}^{P}(D,\alpha)$ and get dynamical systems
\begin{equation*}
  \check\sigma:K\to\Aut\bigl(\Ind_{H}^{P}(D,\beta)\bigr)
  \quad\text{and} \quad
  \check\tau:H\to\Aut\bigl(\Ind_{K}^{P}(D,\alpha)\bigr)
\end{equation*}
for the diagonal actions as in \cite{wil:crossed}*{Lemma~3.54}.  Then
Raeburn's Symmetric Imprimitivity Theorem says that the crossed
products
\begin{equation*}
  \Ind_{H}^{P}(D,\beta)\rtimes_{\check\sigma}K\quad\text{and}\quad
  \Ind_{K}^{P}(D,\alpha)\rtimes_{\check\tau} H
\end{equation*}
are Morita equivalent.  In \cite{quispi:hjm92}, Quigg and Spielberg
proved that Raeburn's Morita equivalence passed to the reduced crossed
products.  (Kasparov had a different proof in
\cite{kas:im88}*{Theorem~3.15} and an Huef and Raeburn gave a
different proof of the Quigg and Spielberg result in
\cite{huerae:mpcps00}*{Corollary~3}.)

We consider the corresponding statements for groupoid dynamical
systems. Let $(\A,G,\alpha)$ be a groupoid dynamical system as in
\cite{muhwil:nyjm08}*{\S4}. Recall that the associated crossed product
$\A\rtimes_{\alpha}G$ is a completion of $\sa_{c}(G,r^{*}\A)$.  If
$\pi$ is a representation of $A:=\sa_{0}(\go,\A)$, then the associated
regular representation of $\A\rtimes_{\alpha}G$ is the representation
$L^{\pi}:=\Ind^{\A}\pi$ acting on $L^{2}(G*\H_{s},\nu^{-1})$ as in
\eqref{eq:9}. The reduced crossed product, $\A\rtimes_{\alpha,r}G$ is
the quotient of $\A\rtimes_{\alpha} G$ by the common kernel of the
$L^{\pi}$ with $\pi$ faithful. Let $\B := r^{*}\A$ with the semidirect
product Fell bundle structure so that $\cs(G,\B)$ is isomorphic to
$\A\rtimes_{\alpha}G$, then it follows from
Example~\ref{ex-crossed-prod} that $\cs_{r}(G,\B)$ is isomorphic to
the reduced crossed product $\A\rtimes_{\alpha,r}G$.

Now let $H$ be a locally compact group and let $X$ be a locally
compact Hausdorff $H$-space. Let $G$ be the transformation groupoid $G
= H \times X$. As in \cite{muhwil:nyjm08}*{Example~4.8}, suppose that
$A=\sa_{0}(X,\A)$ is a $C_{0}(X)$-algebra, and define $\lt : H \to
\Aut(C_{0}(X))$ by $\lt_{h}(\phi)(x) := \phi(h^{-1}\cdot x)$. Suppose
that $\beta:H\to \Aut A$ is a \cs-dynamical system such that
\begin{equation*}
  \beta_{h}(\phi\cdot a)=\lt_{h}(\phi)\cdot \beta_{h}(a)\quad\text{for
    $h\in H$, $\phi\in C_{0}(X)$  and $a\in A$.}
\end{equation*}
Then, following \cite{muhwil:nyjm08}*{Example~4.8}, we obtain a
groupoid dynamical system $(\A,G,\alpha)$ where
\begin{equation*}
  \alpha_{(h,x)}\bigl(a(h^{-1}\cdot x)\bigr) =\beta_{h}(a)(x).
\end{equation*}
Let $\Delta$ be the modular function on $H$. Then the map
$\Phi:C_{c}(H,A)\to \sa_{c}(G,r^{*}\A)$ given by
\begin{equation*}
  \Phi(f)(h,x)=\Delta(h)^{\half}f(h)(x)
\end{equation*}
extends to an isomorphism of $A\rtimes_{\beta}H$ with
$\A\rtimes_{\alpha}G$.

Fix a representation $\pi$ of $A$. By decomposing $\pi$ as a direct
integral over $X$ one checks that $(f \tensor_\pi \xi | g \tensor_\pi
\eta) = (\Phi(f) \tensor_\pi \xi | \Phi(g) \tensor_\pi \eta)$ for $f,g
\in C_{c}(G,A)$ and $\xi,\eta \in \H_{\pi}$.  We use this to show that
$U(f\tensor h)=\Phi(f)\tensor h$ determines a unitary from the space
of the regular representation $\Ind^{A}\pi$ of $A\rtimes_{\beta}H$ to
the space of the regular representation $\Ind^{\A}\pi$ of
$\A\rtimes_{\alpha}G$ which intertwines $\Ind^{A}\pi(f)$ and
$\Ind^{\A} \pi(\Phi(f))$ for all $f$. Therefore $\Phi$ factors through
an isomorphism $A\rtimes_{\beta,r}H \cong \A\rtimes_{\alpha,r}G$.

Now, back to the set-up of Raeburn's Symmetric Imprimitivity
Theorem. Since $\Ind_{H}^{P}(D,\beta)$ is a $C_{0}(P/H)$-algebra, it
is the section algebra of a bundle $\B$ over $P/H$. It is shown in
\cite{muhwil:nyjm08}*{Example~5.12} that there is a groupoid action
$\sigma$ of the transformation groupoid $K\times P/H$ on $\B$ such
that
\begin{equation*}
  \Ind_{H}^{P}(D,\beta)\rtimes_{\check\sigma}K\cong
  \B\rtimes_{\sigma}(K\times P/H).
\end{equation*}
Similarly,
\begin{equation*}
  \Ind_{K}^{P}(D,\alpha)\rtimes_{\check\tau}H\cong
  \A\rtimes_{\tau}(K\backslash P\rtimes H)
\end{equation*}
for an appropriate bundle $\A$ over $K\backslash P$ and action $\tau$.
Furthermore, the trivial bundle $\E:=P\times A$ is an equivalence
between $(\B,K\times P/H,\sigma) $ and $(\A,K\backslash P\times
H,\tau)$ in the sense of \cite{muhwil:nyjm08}*{Definition~5.1}.  (Thus
Raeburn's Symmetric Imprimitivity Theorem is a special case of
\cite{muhwil:nyjm08}*{Theorem~5.5}.)  Therefore the Quigg-Spielberg
result follows from following corollary of our main theorem.

\begin{cor}
  \label{cor--redcrossprod}
  Suppose that $q:\E\to Z$ is an equivalence between the groupoid
  dynamical systems $(\B,H,\beta)$ and $(\A,G,\alpha)$.  Then the
  Morita equivalence of \cite{muhwil:nyjm08}*{Theorem~5.5} factors
  through a Morita equivalence of the reduced crossed products
  $\B\rtimes_{\beta,r}H$ and $\A\rtimes_{\alpha,r}G$.
\end{cor}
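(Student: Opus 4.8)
The plan is to deduce the Corollary from Theorem~\ref{thm-main-reduced} by translating the two groupoid dynamical systems into their semidirect product Fell bundles. Form the Fell bundles $r^{*}\A$ over $G$ and $r^{*}\B$ over $H$ as in Example~\ref{ex-crossed-prod}. That example, together with the discussion preceding the Corollary, supplies natural isomorphisms $\cs(G,r^{*}\A)\cong \A\rtimes_{\alpha}G$ and $\cs(H,r^{*}\B)\cong\B\rtimes_{\beta}H$ at the level of full algebras, and $\cs_{r}(G,r^{*}\A)\cong\A\rtimes_{\alpha,r}G$ and $\cs_{r}(H,r^{*}\B)\cong\B\rtimes_{\beta,r}H$ at the level of reduced algebras. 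Since these identifications are compatible with the quotient maps from full to reduced algebras, it suffices to prove the corresponding statement for the cross-sectional algebras of $r^{*}\A$ and $r^{*}\B$.

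The key step is to realise the Morita equivalence of \cite{muhwil:nyjm08}*{Theorem~5.5} as one arising from a Fell bundle equivalence, so that Theorem~\ref{thm-main-reduced} applies. Starting from the dynamical-system equivalence $q:\E\to Z$ of \cite{muhwil:nyjm08}*{Definition~5.1}, I would make the Banach bundle $\E\to Z$ into an $r^{*}\A\sme r^{*}\B$ equivalence: the left action of $(g,a)\in r^{*}\A$ on $e\in\E$ with $s(g)=r(e)$ is $(g,a)\cdot e:=a\cdot(g\cdot e)$, lying in the fibre over $g\cdot q(e)$; the right action of $r^{*}\B$ is defined symmetrically; and the two Fell-bundle-valued inner products are assembled from the fibrewise imprimitivity-bimodule inner products on the $E_{z}$ together with the $G$- and $H$-actions and the cocycle maps $\g[\cdot,\cdot]$ and $\h[\cdot,\cdot]$. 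Checking the equivalence axioms, in particular the imprimitivity identity $\lip\B<e,f>\cdot g=e\cdot\rip\CC<f,g>$, is routine but notationally heavy; it is essentially the construction underlying the proof of \cite{muhwil:nyjm08}*{Theorem~5.5}, so I would cite that rather than reproduce it, recording only that the full Morita equivalence it produces is the one asserted in Theorem~5.5.

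With the Fell bundle equivalence in place, Theorem~\ref{thm-main-reduced} does the remaining work. It produces a $\cs(G,r^{*}\A)\sme\cs(H,r^{*}\B)$-{\ib} whose quotient by the ideal $I_r$ of elements of reduced norm zero is a $\cs_{r}(G,r^{*}\A)\sme\cs_{r}(H,r^{*}\B)$-{\ib}, and it identifies the quotient maps on the corners with the canonical maps from the full to the reduced cross-sectional algebras. Transporting this conclusion through the isomorphisms of Example~\ref{ex-crossed-prod} yields precisely the desired factorisation of the Muhly--Williams equivalence through a Morita equivalence of $\A\rtimes_{\alpha,r}G$ and $\B\rtimes_{\beta,r}H$.

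I expect the only real obstacle to be the bookkeeping in the middle step: one must verify not merely that $\E$ admits commuting $r^{*}\A$- and $r^{*}\B$-actions with compatible inner products, but that the full-algebra bimodule these produce coincides with the bimodule delivered by \cite{muhwil:nyjm08}*{Theorem~5.5}, rather than being only abstractly Morita equivalent to it. Once this identification is nailed down, the passage to the reduced algebras is immediate, since it is handled entirely by Theorem~\ref{thm-main-reduced}.
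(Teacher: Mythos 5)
Your proposal follows essentially the same route as the paper: translate both dynamical systems into their semidirect-product Fell bundles $r^*\A$ and $r^*\B$, endow $\E$ with an $r^*\A\sme r^*\B$ equivalence structure via the diagonal-type actions and cocycle-twisted inner products, and invoke Theorem~\ref{thm-main-reduced}. The identification you flag as the remaining obstacle is exactly how the paper closes the argument: both bimodules are completions of $\sa_{c}(Z,\E)$, the identity map on that dense subspace is a two-sided module map, and the inner-product formula of \cite{muhwil:nyjm08}*{Equation~(5.1)} is literally the convolution formula for the corresponding elements of $\sa_{c}(L,\linke)$, so the two norms coincide.
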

\begin{proof}
  Recall that $r^*\A := \{(a,x) \in \A \times G : r(a) = r(x)\}$ is a
  Fell bundle over $G$ with bundle map $(a,x) \mapsto x$,
  multiplication $(a,x)(b,y) = (a\alpha_x(b), xy)$ and involution
  $(a,x)^* = (\alpha_{x^{-1}}(a), x^{-1})$ (see
  \cite{muhwil:dm08}*{Example~2.1}), and similarly for $r*\B$. Define
  maps $r^*\A * \E \to \E$ and $\E * r^*\B \to \E$ by
  \begin{equation*}
    (a,g)\cdot \ea := a \cdot\alpha_{g}(\ea)\quad\text{ and }\quad \ea
    \cdot (b,h) := \beta_h(\ea)\cdot b.
  \end{equation*}
  Define pairings $\lip{r^*\A}<\cdot,\cdot> : \E *_s \E \to \A$ and
  $\rip{r^*\B}<\cdot,\cdot> : \E *_r \E \to \B$ by
  \begin{align*}
    \lip{r^*\A}<\ea,\eb> &=
    \big(\rip{A_{r(\ea)}}<\ea,\alpha_{\g[q(\ea),q(\eb)]}(\eb)>,
    \g[q(\ea),q(\eb)]\big)\quad\text{ and } \\
    \rip{r^*\B}<\ea,\eb> &=
    \big(\rip{A_{r(\ea)}}<\ea,\beta_{\h[q(\ea),q(\eb)]}(\eb)>,
    \h[q(\ea),q(\eb)]\big).
  \end{align*}
  It is routine though tedious to show that $\E$ is an $r^*\A\sme
  r^*\B$ equivalence.

  The Morita equivalence $X$ of \cite{muhwil:nyjm08}*{Theorem~5.5} and
  the Morita equivalence $\Pg C^*(L,\linke) \Ph$ of
  Theorem~\ref{thm-main-reduced} are both completions of
  $\sa_{c}(Z,\E)$. From the formulae for the actions of $r^*\A$ on
  $\E$, we see that the identity map on $\sa_{c}(Z,E)$ determines a
  left-module map from $X$ to $\Pg C^*(L,\linke) \Ph$, and similarly
  on the right. So it suffices to show that the norms on $X$ and on
  $\Pg C^*(L,\linke) \Ph$ coincide. For this, observe that the formula
  \cite{muhwil:nyjm08}*{Equation~(5.1)} for the $\A\times_\alpha
  G$-valued inner-product on $\sa_{c}(Z,\E)$ is precisely the
  convolution formula for multiplication of the corresponding elements
  of $\sa_{c}(L,\linke)$ with respect to the Haar system $\ll$
  described in \cite{simwil:jot11}.
\end{proof}

\def\noopsort#1{}\def\cprime{$'$} \def\sp{^}
\begin{bibdiv}
\begin{biblist}

\bib{bmz}{unpublished}{
      author={Buss, A.},
      author={Meyer, R.},
      author={Zhu, C.},
       title={A higher category approach to twisted actions on
  {$C^*$}-algebras},
     address={preprint},
        date={2009},
        note={(arXiv:math.OA.0908.0455v1)},
}

\bib{dauhof:mams68}{article}{
      author={Dauns, John},
      author={Hofmann, Karl~H.},
       title={Representation of rings by sections},
        date={1968},
     journal={Mem. Amer. Math. Soc.},
      volume={83},
       pages={1\ndash 180},
}

\bib{dg:banach}{book}{
      author={Dupr{\'e}, Maurice~J.},
      author={Gillette, Richard~M.},
       title={Banach bundles, {B}anach modules and automorphisms of
  ${C}^*$-algebras},
   publisher={Pitman (Advanced Publishing Program)},
     address={Boston, MA},
        date={1983},
      volume={92},
        ISBN={0-273-08626-X},
      review={\MR{85j:46127}},
}

\bib{fd:representations1}{book}{
      author={Fell, James M.~G.},
      author={Doran, Robert~S.},
       title={Representations of {$*$}-algebras, locally compact groups, and
  {B}anach {$*$}-algebraic bundles. {V}ol. 1},
      series={Pure and Applied Mathematics},
   publisher={Academic Press Inc.},
     address={Boston, MA},
        date={1988},
      volume={125},
        ISBN={0-12-252721-6},
        note={Basic representation theory of groups and algebras},
      review={\MR{90c:46001}},
}

\bib{fd:representations2}{book}{
      author={Fell, James M.~G.},
      author={Doran, Robert~S.},
       title={Representations of {$*$}-algebras, locally compact groups, and
  {B}anach {$*$}-algebraic bundles. {V}ol. 2},
      series={Pure and Applied Mathematics},
   publisher={Academic Press Inc.},
     address={Boston, MA},
        date={1988},
      volume={126},
        ISBN={0-12-252722-4},
        note={Banach $*$-algebraic bundles, induced representations, and the
  generalized Mackey analysis},
      review={\MR{90c:46002}},
}

\bib{hof:74}{unpublished}{
      author={Hofmann, Karl~Heinrich},
       title={Banach bundles},
        date={1974},
        note={Darmstadt Notes},
}

\bib{hof:lnm77}{incollection}{
      author={Hofmann, Karl~Heinrich},
       title={Bundles and sheaves are equivalent in the category of {B}anach
  spaces},
        date={1977},
   booktitle={{$K$}-theory and operator algebras ({P}roc. {C}onf., {U}niv.
  {G}eorgia, {A}thens, {G}a., 1975)},
      series={Lecture Notes in Math},
      volume={575},
   publisher={Springer},
     address={Berlin},
       pages={53\ndash 69},
      review={\MR{58 \#7117}},
}

\bib{hofkei:lnm79}{incollection}{
      author={Hofmann, Karl~Heinrich},
      author={Keimel, Klaus},
       title={Sheaf-theoretical concepts in analysis: bundles and sheaves of
  {B}anach spaces, {B}anach {$C(X)$}-modules},
        date={1979},
   booktitle={Applications of sheaves ({P}roc. {R}es. {S}ympos. {A}ppl. {S}heaf
  {T}heory to {L}ogic, {A}lgebra and {A}nal., {U}niv. {D}urham, {D}urham,
  1977)},
      series={Lecture Notes in Math.},
      volume={753},
   publisher={Springer},
     address={Berlin},
       pages={415\ndash 441},
      review={\MR{MR555553 (81f:46085)}},
}

\bib{huerae:mpcps00}{article}{
      author={Huef, Astrid~an},
      author={Raeburn, Iain},
       title={Regularity of induced representations and a theorem of {Q}uigg
  and {S}pielberg},
        date={2002},
        ISSN={0305-0041},
     journal={Math. Proc. Cambridge Philos. Soc.},
      volume={133},
      number={2},
       pages={249\ndash 259},
      review={\MR{2003g:46080}},
}

\bib{ionwil:pams08}{article}{
      author={Ionescu, Marius},
      author={Williams, Dana~P.},
       title={Irreducible representations of groupoid {$C\sp *$}-algebras},
        date={2009},
        ISSN={0002-9939},
     journal={Proc. Amer. Math. Soc.},
      volume={137},
      number={4},
       pages={1323\ndash 1332},
      review={\MR{MR2465655}},
}

\bib{kmqw:nyjm10}{article}{
      author={Kaliszewski, Steven},
      author={Muhly, Paul~S.},
      author={Quigg, John},
      author={Williams, Dana~P.},
       title={Coactions and {F}ell bundles},
        date={2010},
     journal={New York J. Math.},
      volume={16},
       pages={315\ndash 359},
}

\bib{kas:im88}{article}{
      author={Kasparov, Gennadi~G.},
       title={Equivariant {$KK$}-theory and the {N}ovikov conjecture},
        date={1988},
        ISSN={0020-9910},
     journal={Invent. Math.},
      volume={91},
      number={1},
       pages={147\ndash 201},
      review={\MR{88j:58123}},
}

\bib{kum:pams98}{article}{
      author={Kumjian, Alex},
       title={Fell bundles over groupoids},
        date={1998},
        ISSN={0002-9939},
     journal={Proc. Amer. Math. Soc.},
      volume={126},
      number={4},
       pages={1115\ndash 1125},
      review={\MR{MR1443836 (98i:46055)}},
}

\bib{moutu:xx11}{unpublished}{
      author={Moutuou, El-Ka\"ioum~M.},
      author={Tu, Jean-Louis},
       title={Equivalence of fell systems and their reduced {$C^*$}-algebras},
        date={2011},
        note={(arXiv:math.OA.1101.1235v1)},
}

\bib{mrw:jot87}{article}{
      author={Muhly, Paul~S.},
      author={Renault, Jean~N.},
      author={Williams, Dana~P.},
       title={Equivalence and isomorphism for groupoid {$C^*$}-algebras},
        date={1987},
        ISSN={0379-4024},
     journal={J. Operator Theory},
      volume={17},
      number={1},
       pages={3\ndash 22},
      review={\MR{88h:46123}},
}

\bib{muhwil:dm08}{article}{
      author={Muhly, Paul~S.},
      author={Williams, Dana~P.},
       title={Equivalence and disintegration theorems for {F}ell bundles and
  their {$C\sp *$}-algebras},
        date={2008},
        ISSN={0012-3862},
     journal={Dissertationes Math. (Rozprawy Mat.)},
      volume={456},
       pages={1\ndash 57},
      review={\MR{MR2446021}},
}

\bib{muhwil:nyjm08}{book}{
      author={Muhly, Paul~S.},
      author={Williams, Dana~P.},
       title={Renault's equivalence theorem for groupoid crossed products},
      series={NYJM Monographs},
   publisher={State University of New York University at Albany},
     address={Albany, NY},
        date={2008},
      volume={3},
        note={Available at http://nyjm.albany.edu:8000/m/2008/3.htm},
}

\bib{pim:fic87}{incollection}{
      author={Pimsner, Michael~V.},
       title={A class of {$C^*$}-algebras generalizing both {C}untz-{K}rieger
  algebras and crossed products by {${\bf Z}$}},
        date={1997},
   booktitle={Free probability theory ({W}aterloo, {ON}, 1995)},
      series={Fields Inst. Commun.},
      volume={12},
   publisher={Amer. Math. Soc.},
     address={Providence, RI},
       pages={189\ndash 212},
      review={\MR{1426840 (97k:46069)}},
}

\bib{quispi:hjm92}{article}{
      author={Quigg, John},
      author={Spielberg, Jack},
       title={Regularity and hyporegularity in {\cs}-dynamical systems},
        date={1992},
     journal={Houston J. Math.},
      volume={18},
       pages={139\ndash 152},
}

\bib{rae:ma88}{article}{
      author={Raeburn, Iain},
       title={Induced {$C\sp *$}-algebras and a symmetric imprimitivity
  theorem},
        date={1988},
        ISSN={0025-5831},
     journal={Math. Ann.},
      volume={280},
      number={3},
       pages={369\ndash 387},
      review={\MR{90k:46144}},
}

\bib{rw:morita}{book}{
      author={Raeburn, Iain},
      author={Williams, Dana~P.},
       title={Morita equivalence and continuous-trace {$C^*$}-algebras},
      series={Mathematical Surveys and Monographs},
   publisher={American Mathematical Society},
     address={Providence, RI},
        date={1998},
      volume={60},
        ISBN={0-8218-0860-5},
      review={\MR{2000c:46108}},
}

\bib{simwil:jot11}{article}{
      author={Sims, Aidan},
      author={Williams, Dana~P.},
       title={Renault's equivalence theorem for reduced groupoid
  {$C^*$}-algebras},
        date={2012},
     journal={J. Operator Theory},
       pages={in press},
        note={(arXiv:math.OA.1002.3093)},
}

\bib{wil:crossed}{book}{
      author={Williams, Dana~P.},
       title={Crossed products of {$C{\sp \ast}$}-algebras},
      series={Mathematical Surveys and Monographs},
   publisher={American Mathematical Society},
     address={Providence, RI},
        date={2007},
      volume={134},
        ISBN={978-0-8218-4242-3; 0-8218-4242-0},
      review={\MR{MR2288954 (2007m:46003)}},
}

\end{biblist}
\end{bibdiv}
\end{document}